\DeclareMathOperator{\qdim}{qdim}
\DeclareMathOperator{\ch}{ch}
\DeclareMathOperator{\tr}{tr}
\DeclareMathOperator{\diag}{diag}
\DeclareMathOperator{\glob}{glob}
\begin{document}
\input amssym.def
\setcounter{equation}{0}
\newcommand{\wt}{{\rm wt}}
\newcommand{\spa}{\mbox{span}}
\newcommand{\Res}{\mbox{Res}}
\newcommand{\End}{\mbox{End}}
\newcommand{\Ind}{\mbox{Ind}}
\newcommand{\Hom}{\mbox{Hom}}
\newcommand{\Mod}{\mbox{Mod}}
\newcommand{\m}{\mbox{mod}\ }
\renewcommand{\theequation}{\thesection.\arabic{equation}}
\numberwithin{equation}{section}

\def \End{{\rm End}}
\def \Aut{{\rm Aut}}
\def \Z{\mathbb Z}
\def \H{\mathbb H}
\def \M{\Bbb M}
\def \C{\mathbb C}
\def \R{\mathbb R}
\def \Q{\mathbb Q}
\def \N{\mathbb N}
\def \ann{{\rm Ann}}
\def \<{\langle}
\def \o{\omega}
\def \O{\Omega}
\def \Or{\cal O}
\def \M{{\cal M}}
\def \1t{\frac{1}{T}}
\def \>{\rangle}
\def \t{\tau }
\def \a{\alpha }
\def \e{\epsilon }
\def \l{\lambda }
\def \L{\Lambda }
\def \g{\gamma}
\def \b{\beta }
\def \om{\omega }
\def \o{\omega }
\def \ot{\otimes}
\def \cg{\chi_g}
\def \ag{\alpha_g}
\def \ah{\alpha_h}
\def \ph{\psi_h}
\def \S{\cal S}
\def \nor{\vartriangleleft}
\def \V{V^{\natural}}
\def \voa{vertex operator algebra\ }
\def \voas{vertex operator algebras}
\def \v{vertex operator algebra\ }
\def \1{{\bf 1}}
\def \be{\begin{equation}\label}
\def \ee{\end{equation}}
\def \qed{\mbox{ $\square$}}
\def \pf {\noindent {\bf Proof:} \,}
\def \bl{\begin{lem}\label}
\def \el{\end{lem}}
\def \ba{\begin{array}}
\def \ea{\end{array}}
\def \bt{\begin{thm}\label}
\def \et{\end{thm}}
\def \br{\begin{rem}\label}
\def \er{\end{rem}}
\def \ed{\end{de}}
\def \bp{\begin{prop}\label}
\def \ep{\end{prop}}
\def \p{\phi}
\def \d{\delta}
\def \irr{\rm irr}

\newtheorem{th1}{Theorem}
\newtheorem{ree}[th1]{Remark}
\newtheorem{thm}{Theorem}[section]
\newtheorem{prop}[thm]{Proposition}
\newtheorem{coro}[thm]{Corollary}
\newtheorem{lem}[thm]{Lemma}
\newtheorem{rem}[thm]{Remark}
\newtheorem{de}[thm]{Definition}
\newtheorem{hy}[thm]{Hypothesis}
\newtheorem{conj}[thm]{Conjecture}
\newtheorem{ex}[thm]{Example}

\begin{center}
{\Large {\bf On Orbifold Theory}}\\
\vspace{0.5cm}

Chongying Dong\footnote
{Supported by a NSF grant DMS-1404741 and China NSF grant 11371261}
\\
School of Mathematics, Sichuan University,
 Chengdu 610064 China \& \\
 Department of Mathematics, University of
California, Santa Cruz, CA 95064 USA \\
Li Ren\footnote{Supported by China NSF grant 11301356}\\
 School of Mathematics,  Sichuan University,
Chengdu 610064 China\\
Feng Xu\\
Department of Mathematics, University of California, Riverside, CA
92521, USA \& Department of Mathematics, Dalian University of Technology, China

\end{center}

\begin{abstract} Let $V$ be a simple vertex operator algebra and $G$ a finite automorphism group of $V$ such that $V^G$ is regular. It is proved that every irreducible $V^G$-module occurs in an irreducible $g$-twisted $V$-module for some $g\in G.$ Moreover, the quantum dimensions of each irreducible $V^G$-module are determined
and a global dimension formula for $V$ in terms of twisted modules is obtained.
\end{abstract}

\section{Introduction}

This paper deals with general orbifold theory. More precisely, we are trying to understand the representation theory for the fixed point vertex operator subalgebra of a vertex operator algebra under the action of a finite automorphism group. Let $V$ be a rational  vertex operator algebra and $G$ a finite automorphism group of $V.$ The well known orbifold theory conjecture says that $V^G$ is rational and every irreducible $V^G$-module occurs in an irreducible $g$-twisted $V$-module for some $g\in G.$ Proving this conjecture is definitely  the major task in orbifold theory.

The appearance of the twisted modules is the main feature of the orbifold theory. The twisted modules for lattice vertex operator algebras were constructed and studied in \cite{FLM1}, \cite{L}, \cite{FLM2}, \cite{DL2}. The abstract twisted representation theory such as $g$-rationality and classification of irreducible $g$-twisted $V$-modules were investigated in \cite{DLM4} in terms of Zhu's algebra \cite{Z}, and extended further in \cite{DLM5}, \cite{DLM6}, \cite{MT}. The number of inequivalent irreducible $g$-twisted $V$-modules was determined if $V$ is $g$-rational and $C_2$-cofinite  using the modular invariance of trace functions in orbifold theory \cite{DLM7}.

Motivated by the orbifold theory conjecture, decomposition of $V$ into a direct sum of irreducible $V^G$-modules
was initiated in \cite{DM} and \cite{DLM2}. It was shown that $G$ and $V^G$ form a dual pair on $V$ in the sense of \cite{H1}-\cite{H2} and  a Schur-Weyl duality was obtained. The decomposition of an arbitrary irreducible $g$-twisted $V$-module  into a direct sum of $V^G$-modules was achieved in \cite{DY} and \cite{MT}. In other words,
the irreducible $V^G$-modules occurring in all irreducible twisted modules were classified.

We establish in this paper that if $V^G$ is rational, $C_2$-cofinite and the weight of any irreducible $g$-twisted $V$ module except $V$ itself is positive then every irreducible $V^G$-module is a $V^G$-submodule of irreducible
$g$-twisted $V$-module for some $g\in G.$ This result reduces the orbifold theory conjecture to the rationality of $V^G.$ The main tool  we use to prove the result is the modular invariance of trace functions  \cite{Z}, \cite{DLM7}. This explains why the $C_2$-cofiniteness is necessary. The assumption that any irreducible $g$-twisted $V$-module has a positive weight except the vertex operator algebra itself holds for the most well known rational vertex operator algebras.  This assumption will allow us to use the tensor product and related results given in \cite{H}, \cite{DLN}.

We compute the quantum dimensions of any irreducible $g$-twisted $V$-module and any irreducible $V^G$-module. The quantum dimension for a $V$-module was defined and studied in \cite{DJX} to have a full  Galois theory $V^G\subset V$ \cite{DM}, \cite{HMT} where the quantum dimension $\qdim_{V^G}V$ plays the role that the ordinary dimension plays in the classical Galois correspondence. The quantum dimensions of the irreducible modules are the important invariants of $V$ and the product formula  $\qdim_V(M\boxtimes N)=\qdim_VM\dot\qdim_VN$
for any $V$-modules $M,N$ is very useful in computing the fusion rules. An explicit relation between
the quantum dimension of an irreducible $g$-twisted $V$-module $M$ and the quantum dimension of an irreducible $V^G$-submodule of $M$ is given.

We also find several interesting formulas on the global dimension of
$V.$ The global dimension is defined to be the sum of the squares of
the quantum dimensions of the inequivalent irreducible $V$-modules \cite{DJX}. The
global dimension, in fact, is  the Frobenius-Perron dimension of the
$V$-module category \cite{ENO}, \cite{H} and \cite{DJX}. It was
established in \cite{DJX} that the global dimension $\glob(V)$ is
equal to $\frac{1}{S_{V,V}^2}$ where $S_{V,V}$ is an entry of the
$S$-matrix associated to $V.$ The entry $S_{W,V^G}$ of the
$S$-matrix associated to $V^G$ for any irreducible $V^G$-module $W$
is determined in this paper. It turns out that
$S_{V^G,V^G}=\frac{1}{|G|}S_{V,V}.$ This leads to the well known
formula $\glob(V^G)=|G|^2\glob(V)$ which was known using the
category theory \cite{H}, \cite{DMNO},\cite{KO}, \cite{ENO} (see
Theorem 2.9 of \cite{ADJR}). The same formula also appears in the
setting of conformal nets in \cite{X} and \cite{KLX}. Moreover, we
discover that $|G|\glob(V)$ is a sum of squares of the quantum
dimensions of all irreducible $g$-twisted modules where $g$ runs
through $G.$ This suggests a stronger result that for any
automorphism $g$ of $V$ of finite order, $ \glob(V)$ is a sum of
squares of the quantum dimensions of the irreducible $g$-twisted
$V$-modules.  A proof of this result  involves with the fusion rules and
the $S$-matrix of $V^G$ with $G$ being the cyclic group generated by
$g.$

The paper is organized as follows. In Section 2, we define twisted modules, $g$-rationality following \cite{DLM4}.
We also review some results used in this paper for $g$-rational vertex operator algebras and the modular invariance of trace functions from \cite{DLM7}. The transformation of these functions by the $S=\left(\begin{array}{cc} 0 &-1\\ 1 &0\end{array}\right)$ plays an essential role in the computation of the quantum dimensions and the global dimensions. In Section 3, we first review the irreducible $V^G$-modules occurring in irreducible $g$-twisted modules from \cite{DLM2}, \cite{DY} and \cite{MT}. We then establish that
these modules are complete by showing that $Z_{V^G}(v,\frac{-1}{\tau})$ (see the definition of  $Z_{V^G}(v,\frac{-1}{\tau})$ in Section 2) is a linear combination of $\tau^{\wt[v]}Z_{W}(v,\tau)$ where $W$ runs through the inequivalent irreducible $V^G$-modules occurring in irreducible $g$-twisted modules
for $g\in G.$ Section 4 is devoted to the study of quantum dimensions and global dimensions. First,
the quantum dimension of any irreducible $g$-twisted module $M$ is computed by using the $S$-matrix.
The quantum dimension of any irreducible $V^G$-submodule of $M$  is obtained using the quantum dimension of $M.$ This leads to a global dimension formula of $V$ in terms of quantum dimensions of all irreducible
twisted modules. We explain these results using the Heisenberg vertex operator algebras, the rank one lattice vertex operator algebras and the tensor product of vertex operator algebras. In Section 5  we show that
for any automorphism $g$ of $V$ of finite order, $ \glob(V)$ is a sum of squares of the quantum dimensions of the irreducible $g$-twisted $V$-modules. As an application we prove that if $V$ is holomorphic and $G$ is a cyclic group, then every irreducible $V^G$-module is a simple current. We also show the $V^G$-module
category is equivalent to the $D(G)$-module category as tensor categories where $D(G)$ is the Drinfeld double associated to $G.$

\section{Preliminary}

The various notions of twisted modules for a \voa following \cite{DLM4} are reviewed in this section. The concepts such as  rationality, regularity, and $C_2$-cofiniteness from \cite{Z} and \cite{DLM3} are discussed.
The modular invariance property of the trace functions in orbifold theory  from \cite{Z} and \cite{DLM7} are also given.

\subsection{Basics}

Let $V$ be a vertex operator algebra and $g$ an automorphism of $V$ of finite order $T$. Then  $V$ is a direct sum of eigenspaces of $g:$
\begin{equation*}\label{g2.1}
V=\bigoplus_{r\in \Z/T\Z}V^r,
\end{equation*}
where $V^r=\{v\in V|gv=e^{-2\pi ir/T}v\}$.
We use $r$ to denote both
an integer between $0$ and $T-1$ and its residue class \m $T$ in this
situation.

\begin{de} \label{weak}
A {\em weak $g$-twisted $V$-module} $M$ is a vector space equipped
with a linear map
\begin{equation*}
\begin{split}
Y_M: V&\to (\End\,M)[[z^{1/T},z^{-1/T}]]\\
v&\mapsto\displaystyle{ Y_M(v,z)=\sum_{n\in\frac{1}{T}\Z}v_nz^{-n-1}\ \ \ (v_n\in
\End\,M)},
\end{split}
\end{equation*}
which satisfies the following:  for all $0\leq r\leq T-1,$ $u\in V^r$, $v\in V,$
$w\in M$,
\begin{eqnarray*}
& &Y_M(u,z)=\sum_{n\in \frac{r}{T}+\Z}u_nz^{-n-1} \label{1/2},\\
& &u_lw=0~~~
\mbox{for}~~~ l\gg 0,\label{vlw0}\\
& &Y_M({\mathbf 1},z)=Id_M,\label{vacuum}
\end{eqnarray*}
 \begin{equation*}\label{jacobi}
\begin{array}{c}
\displaystyle{z^{-1}_0\delta\left(\frac{z_1-z_2}{z_0}\right)
Y_M(u,z_1)Y_M(v,z_2)-z^{-1}_0\delta\left(\frac{z_2-z_1}{-z_0}\right)
Y_M(v,z_2)Y_M(u,z_1)}\\
\displaystyle{=z_2^{-1}\left(\frac{z_1-z_0}{z_2}\right)^{-r/T}
\delta\left(\frac{z_1-z_0}{z_2}\right)
Y_M(Y(u,z_0)v,z_2)},
\end{array}
\end{equation*}
where $\delta(z)=\sum_{n\in\Z}z^n$ and
all binomial expressions (here and below) are to be expanded in nonnegative
integral powers of the second variable.
\end{de}

\begin{de}\label{ordinary}
A $g$-{\em twisted $V$-module} is
a $\C$-graded weak $g$-twisted $V$-module $M:$
\begin{equation*}
M=\bigoplus_{\lambda \in{\C}}M_{\lambda}
\end{equation*}
where $M_{\l}=\{w\in M|L(0)w=\l w\}$ and $L(0)$ is the component operator of $Y(\omega,z)=\sum_{n\in \Z}L(n)z^{-n-2}.$ We also require that
$\dim M_{\l}$ is finite and for fixed $\l,$ $M_{\frac{n}{T}+\l}=0$
for all small enough integers $n.$

If $w\in M_{\l}$ we refer to $\l$ as the {\em weight} of
$w$ and write $\l=\wt w.$
\end{de}

We use $\Z_+$ to denote the set of nonnegative integers.
\begin{de}\label{admissible}
 An {\em admissible} $g$-twisted $V$-module
is a  $\frac1T{\Z}_{+}$-graded weak $g$-twisted $V$-module $M:$
\begin{equation*}
M=\bigoplus_{n\in\frac{1}{T}\Z_+}M(n)
\end{equation*}
satisfying
\begin{equation*}
v_mM(n)\subseteq M(n+\wt v-m-1)
\end{equation*}
for homogeneous $v\in V,$ $m,n\in \frac{1}{T}{\Z}.$
\ed

If $g=Id_V$  we have the notions of  weak, ordinary and admissible $V$-modules \cite{DLM3}.

If $M=\bigoplus_{n\in \frac{1}{T}\Z_+}M(n)$
is an admissible $g$-twisted $V$-module, the contragredient module $M'$
is defined as follows:
\begin{equation*}
M'=\bigoplus_{n\in \frac{1}{T}\Z_+}M(n)^{*},
\end{equation*}
where $M(n)^*=\Hom_{\C}(M(n),\C).$ The vertex operator
$Y_{M'}(a,z)$ is defined for $a\in V$ via
\begin{eqnarray*}
\langle Y_{M'}(a,z)f,u\rangle= \langle f,Y_M(e^{zL(1)}(-z^{-2})^{L(0)}a,z^{-1})u\rangle,
\end{eqnarray*}
where $\langle f,w\rangle=f(w)$ is the natural paring $M'\times M\to \C.$
It follows from \cite{FHL} and \cite{X} that $(M',Y_{M'})$ is an admissible $g^{-1}$-twisted $V$-module.
We can also define the contragredient module $M'$ for a $g$-twisted $V$-module $M.$ In this case,
$M'$ is a $g^{-1}$-twisted $V$-module. Moreover, $M$ is irreducible if and only if $M'$ is irreducible.

\begin{de}
A \voa $V$ is called $g$-rational, if the  admissible $g$-twisted module category is semisimple. $V$ is called rational if $V$ is $1$-rational.
\end{de}

There is another important concept called $C_2$-cofiniteness \cite{Z}.
\begin{de}
We say that a \voa $V$ is $C_2$-cofinite if $V/C_2(V)$ is finite dimensional, where $C_2(V)=\langle v_{-2}u|v,u\in V\rangle.$
\end{de}

The following results about $g$-rational \voas \  are well-known \cite{DLM4}, \cite{DLM7}.
\begin{thm}\label{grational}
If $V$ is $g$-rational, then

(1) Any irreducible admissible $g$-twisted $V$-module $M$ is a $g$-twisted $V$-module. Moreover, there exists a number $\l \in \mathbb{C}$ such that  $M=\oplus_{n\in \frac{1}{T}\mathbb{Z_+}}M_{\l +n}$ where $M_{\lambda}\neq 0.$ The $\l$ is called the conformal weight of $M;$

(2) There are only finitely many irreducible admissible  $g$-twisted $V$-modules up to isomorphism.

(3) If $V$ is also $C_2$-cofinite and $g^i$-rational for all $i\geq 0$ then the central charge $c$ and the conformal weight $\l$ of any irreducible $g$-twisted $V$-module $M$ are rational numbers.
\end{thm}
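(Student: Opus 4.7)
For (1) and (2), the plan is to use the $g$-twisted Zhu algebra $A_g(V)$ of \cite{DLM4}. The key correspondence proved there sends an irreducible admissible $g$-twisted $V$-module $M=\oplus_{n\in\frac{1}{T}\Z_+}M(n)$ to its bottom piece $M(0)$, made into a module over $A_g(V)$ via the zero-mode action $[v]\mapsto o(v)$; this induces a bijection with the isomorphism classes of irreducible $A_g(V)$-modules. Assuming $g$-rationality, every admissible $g$-twisted module is completely reducible, which by a standard abstract argument (as in Zhu's treatment of the untwisted case) forces $A_g(V)$ to be a finite-dimensional semisimple $\C$-algebra. This yields (2) at once and also shows every irreducible $M(0)$ is finite-dimensional.

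To complete (1), I would invoke the lemma of \cite{DLM4} that $[\omega]$ lies in the center of $A_g(V)$; consequently it acts on an irreducible $M(0)$ by a scalar $\l$. Since $o(\omega)=L(0)$ on $M(0)$, we have $M(0)\subseteq M_\l$. The admissibility axiom $v_m M(n)\subseteq M(n+\wt(v)-m-1)$ then shows that the $V$-submodule of $M$ generated by $M(0)$ has $L(0)$-spectrum contained in $\l+\frac{1}{T}\Z_+$; irreducibility of $M$ forces this submodule to equal $M$, giving the stated decomposition. Finite-dimensionality of each $L(0)$-eigenspace $M_{\l+n/T}\subseteq M(n)$ then follows from the companion higher-Zhu-algebra arguments of \cite{DLM4}, which identify $M(n)$ with a finite-dimensional module over a finite-dimensional algebra $A_{g,n}(V)$.

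For (3), the strategy is to apply the modular invariance theorem of \cite{DLM7}: under $C_2$-cofiniteness and $g^i$-rationality for all $i\ge 0$, the $\C$-span $\mathcal X$ of the characters $\chi_M(\t)=\tr_M q^{L(0)-c/24}$, as $M$ ranges over the irreducibles of all the $g^i$-twisted module categories, is finite-dimensional and carries a representation $\rho$ of $SL_2(\Z)$. In the basis of characters, $\rho(T)$ is diagonal with eigenvalues $e^{2\pi i(\l_M-c/24)}$, so it suffices to prove $\rho(T)$ has finite order; this is exactly the congruence subgroup property of $\rho$, which asserts that $\ker\rho$ contains some principal congruence subgroup $\Gamma(N)$. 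Applying the resulting root-of-unity constraint with $M=V$ (whose conformal weight is $0$) gives $c\in\Q$, and for general $M$ one then concludes $\l_M\in\Q$.

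The principal obstacle is clearly part (3): steps (1) and (2) follow essentially automatically once $A_g(V)$ is in hand, whereas the rationality of $c$ and $\l$ rests on the deeper congruence subgroup theorem for the representation $\rho$, and is the only place where $C_2$-cofiniteness enters in an essential way.
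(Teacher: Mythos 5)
The paper does not actually prove this theorem; it quotes it from \cite{DLM4} and \cite{DLM7}. Measured against those sources, your treatment of (1) and (2) is essentially the standard argument of \cite{DLM4}: the bijection $M\mapsto M(0)$ with irreducible $A_g(V)$-modules, semisimplicity and finite-dimensionality of $A_g(V)$ from complete reducibility of the generalized Verma modules, centrality of $[\omega]$ giving the scalar $\l$ on $M(0)$, and the grading shift $[L(0),v_m]=(\wt v-m-1)v_m$ propagating $\l$ to all of $M$. That part is fine.

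Part (3) has a genuine gap: your argument is circular. The congruence subgroup property of $\rho$ is not an available input here --- in \cite{DLN} it is \emph{derived from} the fact that $\rho(T)$ has finite order, and the finite order of $\rho(T)$ is in turn obtained from the rationality of the conformal weights and of $c$, i.e.\ from the very statement (3) you are trying to prove (the only independent route to finite order of the twist, Vafa's categorical theorem, applies to objects of a modular tensor category and would require knowing that the $g$-twisted modules sit inside $\Mod(V^{\langle g\rangle})$ with $V^{\langle g\rangle}$ regular --- which is not among the hypotheses of (3) and is essentially the orbifold problem this paper is about). Abstract finite-dimensionality of an $SL_2(\Z)$-representation does not force $\rho(T)$ to have finite order, so nothing short of such an input will close the loop. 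A secondary slip: on the full twisted-sector space $W$, $\rho(T)$ is not diagonal in the basis of trace functions; it permutes the sectors via $(g^i,g^j)\mapsto (g^i,g^{i+j})$, so only $\rho(T)^{o(g)}$ is diagonal --- harmless, but it shows the setup was not examined closely. The actual proof in \cite{DLM7} avoids all of this by the Anderson--Moore/Vafa differential-equation method: each $Z_M(v,(g,h),\tau)$ satisfies a monic modular linear differential equation whose coefficients are (meromorphic) modular forms with $q^{1/N}$-expansions, the exponents $\l-c/24$ appear as roots of the associated indicial equation, and a Puiseux/Galois argument on the exponents of solutions of such an equation forces them to be rational. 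If you want to salvage your write-up of (3), that is the mechanism to substitute for the appeal to the congruence property.
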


\begin{de}
A \voa $V$ is called regular if every weak $V$-module is a direct sum of irreducible $V$-modules.
\end{de}

A \voa $V=\oplus_{n\in \Z}V_n$  is said to be of CFT type if $V_n=0$ for negative
$n$ and $V_0=\C {\bf 1}.$
It is proved in \cite{ABD} that if  $V$ is of CFT type, then regularity is equivalent to rationality and $C_2$-cofiniteness. Also $V$ is regular if and only if the weak module category is semisimple \cite{DYu}.

\subsection{Modular Invariance}

Let $V$ be a vertex operator algebra, $g$ an automorphism of $V$ of order $T$ and $M=\oplus_{n\in\frac{1}{T}\Z_+} M_{\l+n}$ a $g$-twisted $V$-module.

For any homogeneous element $v\in V$ we define a trace function associated to $v$ as follows:
\begin{equation*}
Z_M(v,q)=\tr_{M}o(v)q^{L(0)-c/24}=q^{\lambda-c/24}\sum_{n\in\frac{1}{T}\Z_+} \tr_{M_{\l+n}}o(v)q^n
\end{equation*}
where $o(v)=v(\wt v-1)$ is the degree zero operator of $v$. This is a formal power series in variable $q.$
It is proved in \cite{Z} and \cite{DLM7} that
$Z_M(v,q)$ converges to a holomorphic function on the domain $|q|<1$ if $V$ is $C_2$-cofinite.
We also use $Z_M(v,\tau)$ to denote the holomorphic function $Z_M(v,q).$
Here and below $\tau$ is in the complex upper half-plane $\H$ and $q=e^{2\pi i\tau}.$

Note that if $v=\1$ is the vacuum then $Z_M(\1,q)$ is the formal character of $M.$ We simply denote
$Z_M(\1,q)$ and $Z_M(1,\tau)$ by $\chi_M(q)$ and $\chi_M(\tau),$ respectively. $\chi_M(q)$ is called the character of $M.$

To proceed further, we need the action of $\Aut(V)$ on twisted modules \cite{DLM7}. Let $g, h$ be two automorphisms of $V$ with $g$ of finite order. If $(M, Y_M)$ is a weak $g$-twisted $V$-module, there is a weak $h^{-1}gh$-twisted  $V$-module $(M\circ h, Y_{M\circ h})$ where $M\circ h\cong M$ as vector spaces and
\begin{equation*}
Y_{M\circ h}(v,z)=Y_M(hv,z)
\end{equation*}
for $v\in V.$
This defines a left action of $\Aut(V)$ on weak twisted $V$-modules and on isomorphism
classes of weak twisted $V$-modules. Symbolically, we write
\begin{equation*}
(M,Y_M)\circ h=(M\circ h,Y_{M\circ h})= M\circ h,
\end{equation*}
where we sometimes abuse notation slightly by identifying $(M, Y_M)$ with the isomorphism
class that it defines.

If $g,h$ commute, $h$ clearly acts on the $g$-twisted modules.
 Denote by $\mathscr{M}(g)$ the equivalence classes of irreducible $g$-twisted $V$-modules and set $\mathscr{M}(g,h)=\{M \in \mathscr{M}(g)| h\circ M\cong M\}.$ Note from Theorem \ref{grational} that
 if $V$ is $g$-rational, both $\mathscr{M}(g)$ and $\mathscr{M}(g,h)$ are finite sets.
 For any $M\in \mathscr{M}(g,h),$ there is a $g$-twisted $V$-module isomorphism
\begin{equation*}
\varphi(h) : M\circ h\to M.
\end{equation*}
The linear map $\varphi(h)$ is unique up to a nonzero scalar. If $h=1$ we simply take $\varphi(1)=1.$
For $v\in V$ we set
\begin{equation*}
Z_M(v, (g,h),\tau)=\tr_{_M}o(v)\varphi(h) q^{L(0)-c/24}=q^{\lambda-c/24}\sum_{n\in\frac{1}{T}\Z_+}\tr_{_{M_{\l+n}}}o(v)\varphi(h)q^{n}
\end{equation*}
which is a holomorphic function on $\H$ \cite{DLM7}. Note that $Z_M(v, (g,h),\tau)$ is defined up to a nonzero scalar. It is evident that if $h=1$ then $Z_M(v,(g,1),\tau)=Z_M(v,\tau).$

In the rest of this  paper we assume the following:
\begin{enumerate}
\item[(V1)] $V=\oplus_{n\geq 0}V_n$ is a simple vertex operator algebra  of CFT type,
\item[(V2)] $G$ is a finite automorphism group of $V$ and $V^G$ is a vertex operator algebra of CFT type,
\item[(V3)] $V^G$ is $C_{2}$-cofinite and  rational,
\item[(V4)] The conformal weight of any  irreducible $g$ twisted $V$-module for $g\in G$ except
$V$ itself is positive.
\end{enumerate}

The following results are obtained in \cite{ABD},  \cite{ADJR} and \cite{HKL}.
\begin{lem}\label{general} Let $V$ and $G$ be as before. Then

(1)  $V$ is $C_2$-cofinite,

(2)  $V$ is $g$-rational for all $g\in G.$
\end{lem}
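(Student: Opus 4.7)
The plan is to combine three ingredients from the cited literature.

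For (1), I would first invoke the Schur--Weyl type duality developed in \cite{DM} and \cite{DLM2}: since $V$ is a simple vertex operator algebra of CFT type and $G$ is a finite automorphism group, $V$ decomposes as a finite direct sum of irreducible $V^G$-modules with finite multiplicities, so in particular $V$ is a finitely generated $V^G$-module. The main theorem of \cite{ABD} is tailored to exactly this setting: under the standing hypotheses (V1)--(V3), $C_2$-cofiniteness transfers from $V^G$ to the finite simple extension $V$. Invoking it gives (1) directly.

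For (2), I would route the argument through the theory of vertex operator algebra extensions. Because $V^G$ is rational and $C_2$-cofinite, its module category $\mathcal{C}$ is a modular tensor category. By \cite{HKL}, the simple CFT-type extension $V^G\hookrightarrow V$ is equivalent to the datum of a commutative associative algebra object $A=V$ in $\mathcal{C}$, and untwisted $V$-modules correspond to local $A$-modules in $\mathcal{C}$. To handle $g$-twisted $V$-modules for $g\in G$, one uses the $G$-crossed extension of this correspondence developed in \cite{ADJR}: twisted $V$-modules sit inside a $G$-graded braided extension of the category of $A$-modules, with the $g$-twisted modules forming the $g$-homogeneous component. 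Semisimplicity of $\mathcal{C}$ propagates to each $g$-graded component, yielding $g$-rationality of $V$ for every $g\in G$.

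The step I expect to require the most care is the categorical identification in (2): one must verify that the objects arising as $g$-twisted $V$-modules in the vertex-algebraic sense correspond exactly to the $g$-homogeneous component of the $G$-crossed extension built from $A$, and that semisimplicity in both languages matches (in particular that every object really does lift to an admissible $g$-twisted module in the sense of Definition~\ref{admissible}). This is the content of the bridge built in \cite{ADJR}, relying on \cite{HKL}; once that correspondence is in place, both (1) and (2) become essentially formal consequences of the rationality and $C_2$-cofiniteness of $V^G$ together with the dual pair decomposition of $V$ under $G$.
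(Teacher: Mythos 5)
Your proposal is correct and takes essentially the same route as the paper: the paper supplies no argument of its own for this lemma, stating only that the results "are obtained in \cite{ABD}, \cite{ADJR} and \cite{HKL}," which are exactly the references you invoke (together with the \cite{DM}--\cite{DLM2} decomposition of $V$ as a finite sum of irreducible $V^G$-modules, which is indeed the standard way to feed \cite{ABD} for part (1)). The only caveat is that your $G$-crossed braided-category gloss on part (2) is a more categorical phrasing than the direct vertex-algebraic argument actually carried out in \cite{ADJR}, but since the paper delegates the entire proof to those citations, this does not constitute a divergence in approach.
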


There is another vertex operator algebra $(V, Y[~], \1, \tilde{\omega})$ associated to $V$
in \cite{Z}.  Here $\tilde{\omega}=\omega-c/24$ and
$$Y[v,z]=Y(v,e^z-1)e^{z\cdot \wt v}=\sum_{n\in \Z}v[n]z^{n-1}$$
for homogeneous $v.$ We also write
$$Y[\tilde{\omega},z]=\sum_{n\in \Z}L[n]z^{-n-2}.$$
If $v\in V$ is homogeneous in the second vertex operator algebra, we denote its weight by $\wt [v].$

Let $P(G)$ denote the ordered commuting pairs in $G.$
For $(g,h)\in P(G)$ and $M\in \mathscr{M}(g,h),$ $Z_M(v,(g,h),\tau)$ is a function
on $V\times \H.$ Let $W$ be the vector space spanned by these functions. It is clear that
the dimensional $d$ of $W$ is equal to $\sum_{(g,h)\in P(G)}|\mathscr{M}(g,h)|.$
We now define an action of the modular group $\Gamma$ on $W$ such that
\begin{equation*}
Z_M|_\gamma(v,(g,h),\tau)=(c\tau+d)^{-{\rm wt}[v]}Z_M(v,(g,h),\gamma \tau),
\end{equation*}
where
\begin{equation}\label{e1.11}
\gamma: \tau\mapsto\frac{ a\tau + b}{c\tau+d},\ \ \ \gamma=\left(\begin{array}{cc}a & b\\ c & d\end{array}\right)\in\Gamma=SL(2,\Z).
\end{equation}
 We let $\gamma\in \Gamma$ act on the right of $P(G)$ via
$$(g, h)\gamma = (g^ah^c, g^bh^d ).$$

Using Lemma \ref{general} we have the following results \cite{DLM7}:
\begin{thm}\label{minvariance} (1) There is a representation $\rho: \Gamma\to GL(W)$
such that for $(g, h)\in P(G),$   $\gamma =\left(\begin{array}{cc}a & b\\ c & d\end{array}\right)\in \Gamma,$
and $M\in \mathscr{M}(g,h),$
$$
Z_{M}|_{\gamma}(v,(g,h),\tau)=\sum_{N\in \mathscr{M}(g^ah^c,{g^bh^d)}} \gamma_{M,N} Z_{N}(v,(g, h)\gamma, ~\tau)$$
where $\rho(\gamma)=(\gamma_{M,N}).$
That is,
$$Z_{M}(v,(g,h),\gamma\tau)=(c\tau+d)^{{\rm wt}[v]}\sum_{N\in \mathscr{M}(g^ah^c,{g^bh^d)}} \gamma_{M,N} Z_{N}(v,(g, h)\gamma, ~\tau).$$

(2) The cardinalities $|\mathscr{M}(g,h)|$ and $|\mathscr{M}(g^ah^c,g^bh^d)|$ are equal for any $(g,h)\in P(G)$ and
$\gamma\in \Gamma.$ In particular, the number of inequivalent irreducible $g$-twisted $V$-modules is exactly the number of irreducible $V$-modules which are  $g$-stable.
\end{thm}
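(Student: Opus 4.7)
The plan is to adapt the Zhu--Dong--Li--Mason strategy \cite{Z,DLM7} to the orbifold setup. Assumption (V3) together with Lemma \ref{general} provides the two inputs on $V$ that this machinery requires: $C_2$-cofiniteness and $g$-rationality for every $g \in G$.

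I would first introduce, for each commuting pair $(g,h) \in P(G)$, an abstract space $\mathcal{C}(g,h)$ of \emph{one-point $(g,h)$-conformal blocks}: holomorphic functions $S: V \times \H \to \C$, linear in the first variable, satisfying a list of axioms including $(g,h)$-equivariance along the two cycles of the torus and a Zhu-type recursion that expresses $S(u[-1]v,\tau)$ in terms of $S(v,\tau)$, its $\tau$-derivatives, and the values $S(u[n]v,\tau)$ for $n \geq 0$. These axioms are tailored so that every trace function $Z_M(v,(g,h),\tau)$ with $M \in \mathscr{M}(g,h)$ lies in $\mathcal{C}(g,h)$.

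The central technical step is to show $\dim \mathcal{C}(g,h) = |\mathscr{M}(g,h)|$, with the trace functions forming a basis. Finite-dimensionality uses $C_2$-cofiniteness to reduce any block to its values on a finite spanning set for $V/C_2(V)$, the remaining data being compressed by the recursion. Linear independence of the $Z_M$ follows from the distinct conformal-weight exponents appearing in the $q^{\lambda - c/24}$-expansions for inequivalent irreducible modules, and spanning follows from $g$-rationality. With this identification in hand, one verifies by direct substitution that the slash action
\[
(S|_\gamma)(v,\tau) = (c\tau + d)^{-\wt[v]} S(v,\gamma\tau)
\]
maps $\mathcal{C}(g,h)$ isomorphically to $\mathcal{C}((g,h)\gamma)$, because all the defining axioms are $\Gamma$-covariant once the commuting pair is relabeled by $(g,h) \mapsto (g^a h^c, g^b h^d)$. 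Expanding $Z_M|_\gamma$ in the basis of $\mathcal{C}((g,h)\gamma)$ produces the scalars $\gamma_{M,N}$, and compatibility of the slash actions under composition of modular transformations gives the representation $\rho:\Gamma \to GL(W)$. Part (2) is an immediate consequence: the isomorphism $\mathcal{C}(g,h) \cong \mathcal{C}((g,h)\gamma)$ forces the cardinalities to coincide, and specializing to $h = 1$ with $\gamma = \left(\begin{smallmatrix}0 & -1 \\ 1 & 0\end{smallmatrix}\right)$ yields the final sentence.

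The main obstacle is precisely the finite-dimensionality and basis identification of $\mathcal{C}(g,h)$: one must formulate the correct axioms in the twisted setting, prove the twisted Zhu recursion rigorously, and verify its $\Gamma$-covariance with the proper modular weight $\wt[v]$. All of this has been carried out in \cite{DLM7}, so under our hypotheses the theorem is obtained by invoking that framework.
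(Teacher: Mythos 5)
Your proposal matches the paper's treatment exactly: the paper proves nothing here beyond observing that Lemma \ref{general} supplies $C_2$-cofiniteness and $g$-rationality of $V$ for all $g\in G$, and then quotes the theorem from \cite{DLM7}, whose internal strategy (the space of one-point $(g,h)$-conformal blocks, finite-dimensionality from $C_2$-cofiniteness, trace functions as a basis, $\Gamma$-covariance of the slash action permuting the pairs $(g,h)$) is what you have sketched. The only slight imprecision is your justification of linear independence by ``distinct conformal-weight exponents'' --- inequivalent irreducibles can share a conformal weight, and the actual argument separates them via the linear independence of characters of the inequivalent top-level modules over the twisted Zhu algebra --- but since you defer to \cite{DLM7} for the details this does not affect correctness.
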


\begin{rem}\label{trans} There are two special cases most in the present paper:
If $\gamma=S=\left(\begin{array}{cc}0 & -1\\ 1 & 0\end{array}\right)$ we
 have:
\begin{equation}\label{S-tran1}
Z_{M}(v,-\frac{1}{\tau})=\tau^{\wt[v]}\sum_{N\in \mathscr{M}(1,g^{-1}) }S_{M,N} Z_{N}(v,(1,g^{-1}),\tau)
\end{equation}
for $M\in \mathscr{M}(g)$ and
\begin{equation}\label{S-tran2}
Z_{N}(v,(1,g),-\frac{1}{\tau})=\tau^{\wt[v]}\sum_{M\in \mathscr{M}(g)} S_{N,M} Z_{M}(v,\tau).
\end{equation}
\end{rem}
Since we are dealing with various vertex operator algebra, we use $\mathscr{M}_V$ for $\mathscr{M}(1).$
The matrix $S=(S_{M,N})_{M,N\in \mathscr{M}_V}$ is called $S$-matrix of $V$ and is independent of the choice of vector $v.$

\subsection{Fuison rules and Verlinde Fromula}

Let $V$ be as before and $M,N,W\in  \mathscr{M}_V.$
The fusion rule  $N_{M,N}^W$ is  $\dim I_V \left(
\begin{array}{c}
\ \ W \ \\
M \ \  N
\end{array}
\right),$ where
$I_V \left (
\begin{array}{c}
\ \ W \ \\
M \ \  N
\end{array}
 \right)$
 is the space of intertwining operators of type
 $\left(
\begin{array}{c}
\ \ W \ \\
M \ \  N
\end{array}
 \right).$

Since $V$ is rational there is a tensor product $\boxtimes$ of two $V$-modules \cite{HL1}-\cite{HL3} such that
if $M,N$ are irreducible then $M\boxtimes N=\sum_{W\in  \mathscr{M}_V}N_{M,N}^WW.$  The irreducible
$V$-module is called a simple current if $M\boxtimes N$ is irreducible again for any irreducible module $N.$

The following Verlinde formula \cite{V} was proved in \cite{H}.
\begin{thm}\label{Verlinde} Let $V$ be as before. Then

(1) $(S^{-1})_{M,N}=S_{M,N'}=S_{M',N},$ and $S_{M',N'}=S_{M,N}; $

(2) $S$ is symmetric and $S^2=(\delta_{M,N'});$

(3) $N_{M,N}^W=\sum_{U\in  \mathscr{M}_V}\frac{S_{N,U}S_{M,U}S^{-1}_{U,W}}{S_{V,U}};$

(4) The $S$-matrix diagonalizes the fusion matrix $F(M)=(N_{M,N}^W)_{N,W\in  \mathscr{M}_V}.$ More explicitly, $S^{-1}F(M)S=\diag(\frac{S_{M,N}}{S_{V,N}})_{N\in  \mathscr{M}_V}.$  In particular, $S_{V,N}\neq 0$ for any $N.$
\end{thm}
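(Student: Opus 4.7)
The plan is to combine the modular invariance of Theorem~\ref{minvariance} with the tensor category structure on $\mathscr{M}_V$. The key technical input is the $S$-transformation
\begin{equation*}
Z_M(v,-1/\tau)=\tau^{\wt[v]}\sum_{N\in\mathscr{M}_V} S_{M,N}Z_N(v,\tau),
\end{equation*}
its iterates, and the non-degeneracy of the resulting pairing on characters.

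First, for parts (1) and (2), I would apply the $S$-transformation twice. The element $S^2=-I$ of $SL(2,\Z)$ fixes $\tau$, and its action on $\mathscr{M}_V$ via the representation $\rho$ reduces to charge conjugation $M\mapsto M'$, visible from the contragredient definition $\langle Y_{M'}(a,z)f,u\rangle=\langle f,Y_M(e^{zL(1)}(-z^{-2})^{L(0)}a,z^{-1})u\rangle$. Comparing coefficients under linear independence of the characters $Z_N(v,\tau)$ in $v$ yields $\sum_P S_{M,P}S_{P,N}=\delta_{M,N'}$, which is (2); then (1) follows by inverting. Symmetry of $S$ is forced by the duality pairing between the intertwining operator spaces $I_V\binom{W}{MN}$ and $I_V\binom{W'}{NM'}$ induced by adjunctions, or equivalently from the consistency of the $\gamma=-I$ action.

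For part (3), the substantive Verlinde formula, I would follow Huang's strategy via genus-one one-point functions built from intertwining operators. For an intertwiner $\mathcal{Y}$ of type $\binom{W}{MW}$ and a suitable lowest weight vector $u\in M$, form
\begin{equation*}
F_{M,\mathcal{Y}}(v,\tau)=\tr_W\, o(v)\,\mathcal{Y}(u,1)\, q^{L(0)-c/24}.
\end{equation*}
Using $C_2$-cofiniteness one proves convergence and that, as $\mathcal{Y}$ varies, these span a finite-dimensional $\Gamma$-invariant space equal to the span of the characters $Z_U(v,\tau)$ rescaled by $S_{M,U}/S_{V,U}$. Computing the $S$-transformation of $F_{M,\mathcal{Y}}$ in two different ways---once from modular invariance of torus one-point functions, once from the intertwining relation combined with the $S$-transformation on characters---and inverting the resulting change-of-basis matrix yields precisely the stated $S$-matrix expression for $N_{M,N}^W$. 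The principal obstacle is exactly the analytic and geometric input needed here: convergence of these correlation functions, associativity of the $P(z)$-tensor product, and Huang's proof that $\mathscr{M}_V$ is a modular tensor category. The non-vanishing $S_{V,U}\neq 0$ asserted in (4) emerges as a by-product, since the eigenvalues $S_{M,U}/S_{V,U}$ would otherwise be ill-defined while the fusion matrix is manifestly finite.

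Finally, (4) is an immediate algebraic consequence of (3): multiplying both sides of the Verlinde formula by $S_{W,U}$, summing over $W$, and using $\sum_W S^{-1}_{U',W}S_{W,U}=\delta_{U,U'}$ from part (1), gives $\sum_W N^W_{M,N}S_{W,U}=S_{M,U}S_{N,U}/S_{V,U}$, which rewrites as $F(M)S=S\,\diag(S_{M,U}/S_{V,U})$, the diagonalization claim.
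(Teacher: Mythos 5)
The paper offers no proof of this theorem at all: it is imported as a black box, with the sentence ``The following Verlinde formula \cite{V} was proved in \cite{H}'' immediately preceding the statement. So there is no internal argument to compare against, and the right benchmark is Huang's proof in \cite{H}. Your outline is a fair roadmap of that proof: genus-one one-point functions built from intertwining operators, convergence and modular invariance of the space they span via $C_2$-cofiniteness, associativity of the $P(z)$-tensor product, and extraction of the fusion rules by computing the $S$-transformation two ways; and your derivation of (4) from (3) by multiplying by $S_{W,U}$, summing over $W$, and using part (1) is correct. Two caveats are worth flagging. First, your justification of $S_{V,U}\neq 0$ is circular as written: you cannot argue that the eigenvalues $S_{M,U}/S_{V,U}$ ``would otherwise be ill-defined'' before establishing that the diagonalization takes that form; in \cite{H} the nonvanishing of $S_{V,U}$ is itself one of the substantive conclusions of the argument, not a well-definedness meta-observation. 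Second, the symmetry of $S$ and the identification of $\rho(-I)$ with charge conjugation $M\mapsto M'$ are genuinely nontrivial theorems requiring the rigidity and modular tensor category structure; ``forced by the duality pairing, or equivalently from the consistency of the $\gamma=-I$ action'' names the statement rather than proving it. Since the paper itself defers the entire theorem to \cite{H}, these gaps do not put you at odds with the paper, but they are exactly the places where your sketch leans on machinery you have not supplied.
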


We also have \cite{DLN}:
\begin{prop} \label{pDLN}The $S$-matrix is unitary and $S_{V,M}=S_{M,V}$ is positive for any irreducible $V$-module $M.$
\end{prop}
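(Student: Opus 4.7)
The plan is to establish the two claims of the proposition---unitarity of $S$ and strict positivity of the vacuum row entries $S_{V,M}$---by combining Theorem~\ref{Verlinde} with an asymptotic analysis of characters and with the rationality of the modular data provided by Theorem~\ref{grational}(3).

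\textbf{Positivity of $S_{V,M}$.} I would specialize the modular $S$-transformation
\begin{equation*}
\chi_M\!\left(-\tfrac{1}{\tau}\right)=\sum_{N\in\mathscr{M}_V}S_{M,N}\,\chi_N(\tau)
\end{equation*}
to $\tau=it$ and analyze the regime $t\to\infty$. Each term on the right has leading behavior $\chi_N(it)\sim \dim N(0)\,e^{-2\pi t(h_N-c/24)}$, so the dominant contribution comes from the $N$ minimizing $h_N-c/24$. By hypothesis~(V4) applied with $g=1$, together with (V1) which forces $h_V=0$, this minimum is attained uniquely by $N=V$. Hence
\begin{equation*}
\chi_M\!\left(\tfrac{i}{t}\right)\cdot e^{-2\pi tc/24}\;\longrightarrow\;S_{M,V}\qquad\text{as }t\to\infty.
\end{equation*}
The left-hand side is a quotient of strictly positive real numbers for every $t>0$ (characters on the imaginary axis are positive $q$-series), so the limit $S_{M,V}$ is a nonnegative real number. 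Theorem~\ref{Verlinde}(4) gives $S_{V,N}\ne 0$ for every $N$, so by symmetry $S_{M,V}\ne 0$; therefore $S_{V,M}=S_{M,V}>0$ for every irreducible $V$-module $M$.

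\textbf{Unitarity of $S$.} Theorem~\ref{Verlinde}(2) gives $S^{2}=C$ with $C_{M,N}=\delta_{M,N'}$, so by symmetry of $S$ one has $\sum_{U}S_{M,U}S_{N,U}=\delta_{M,N'}$. Unitarity $SS^{*}=I$ is therefore equivalent to the reality identity
\begin{equation*}
\overline{S_{M,N}}=S_{M,N'}\qquad\text{for all }M,N\in\mathscr{M}_V.
\end{equation*}
To establish this identity I would invoke the congruence property of the modular representation. By Theorem~\ref{grational}(3) the central charge and all conformal weights are rational, so the diagonal $T$-matrix (with entries $e^{2\pi i(h_M-c/24)}$) has finite order and the representation $\rho$ of $\Gamma$ factors through $\mathrm{SL}_2(\mathbb{Z}/N)$ for some $N$. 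Consequently the entries of $S$ lie in the cyclotomic field $\mathbb{Q}(\zeta_N)$, and complex conjugation on this field is realized by the Galois automorphism $\zeta_N\mapsto\zeta_N^{-1}$, which sends $T$ to $T^{-1}$. Combining this with the relations $(ST)^{3}=S^{2}=C$ and with the positivity $S_{V,M}>0$ already proved, one concludes that complex conjugation acts on $S$ by the charge-conjugation permutation $M\mapsto M'$ of row indices, yielding the desired reality identity and hence unitarity.

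\textbf{Main obstacle.} The asymptotic positivity argument is short once (V4) is in hand, but the strict positivity of the vacuum row is not merely auxiliary: it is the crucial input that pins down the Galois-conjugate permutation in the unitarity step as charge conjugation, rather than an arbitrary permutation of labels. The technically deepest ingredient is the congruence property of the modular representation together with the careful tracking of how complex conjugation interacts with the $\mathrm{SL}_2(\mathbb{Z})$-action on $W$, and it is here that the rationality of the central charge and conformal weights provided by Theorem~\ref{grational}(3) is indispensable.
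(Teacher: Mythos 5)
The paper does not prove this proposition at all: it is quoted verbatim from the reference \cite{DLN} (Dong--Lin--Ng), so there is no internal proof to compare against. Judged on its own terms, your positivity argument is correct and self-contained under the paper's hypotheses: (V1) gives $h_V=0$ with $\dim V_0=1$, (V4) with $g=1$ gives $h_N>0$ for $N\neq V$, the characters are positive on the imaginary axis, so $S_{M,V}=\lim_{t\to\infty}\chi_M(i/t)/\chi_V(it)\geq 0$, and Theorem \ref{Verlinde}(4) together with symmetry rules out the value $0$. This is exactly the style of asymptotic argument the paper itself uses in Proposition \ref{tqdim}.

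The unitarity half, however, has a genuine gap. The step ``the $T$-matrix has finite order, so $\rho$ factors through $\mathrm{SL}_2(\mathbb{Z}/N)$'' is false as a deduction: the normal closure of $T^N$ in $\mathrm{SL}_2(\mathbb{Z})$ has infinite index for $N\geq 6$, so finiteness of the order of $\rho(T)$ gives no control whatsoever over $\ker\rho$. The congruence property is precisely the main theorem of \cite{DLN} --- the very paper this proposition is cited from --- and it is deep; invoking it here is circular in spirit and unjustified in substance. The concluding sentence ``one concludes that complex conjugation acts on $S$ by the charge-conjugation permutation'' is likewise an assertion, not an argument: even granting cyclotomic entries, the fact that a Galois automorphism acts on the rows of $S$ by a signed permutation already requires the Verlinde formula plus integrality of the fusion coefficients (the Coste--Gannon argument), and identifying the permutation attached to complex conjugation requires further input. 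A repair that avoids the congruence property entirely is available: the fusion matrices satisfy $F(M')=F(M)^{t}$, so the commuting family $\{F(M)\}$ is closed under adjoints and its simultaneous eigenvalues obey $\overline{\lambda_U(M)}=\lambda_U(M')$ where $\lambda_U(M)=S_{M,U}/S_{V,U}$; combined with the positivity of $S_{V,U}$ you have already established and with Theorem \ref{Verlinde}(1), this yields $\overline{S_{M,U}}=S_{M',U}=S_{M,U'}$, i.e. $\overline{S}=SC$, whence $SS^{\dagger}=S^{2}C=C^{2}=I$. As written, though, the unitarity portion of your proof does not stand.
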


A simple current $M$ has order $n>0$ if $n$ is the least positive integer such that $M^{\boxtimes n}=V.$  In fact, the simple currents form an abelain group with product $\boxtimes$ and $n$ is exactly the order of $M$ in this group.
\begin{lem}\label{simplefusion} If $M$ is a simple current of order $n$ then  $ \frac{S_{M,N}}{S_{V,N}}$ is a $n$-th root of unity
for any irreducible $V$-module $N.$ Moreover, $F(M^{\boxtimes r})=F(M)^r$ for any integer $r.$
\end{lem}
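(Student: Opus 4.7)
The plan is to read off the statement directly from the diagonalization provided by the Verlinde formula (Theorem \ref{Verlinde}(4)) once I know that the fusion matrices satisfy $F(M)^n = I$. To that end, I first establish the "moreover" assertion $F(M^{\boxtimes r}) = F(M)^r$ and then feed it back into the eigenvalue statement.

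For the multiplicativity, I would argue from the associativity and commutativity of $\boxtimes$ on the Grothendieck ring. Reading $F(M)$ as the linear endomorphism $X \mapsto M \boxtimes X$ written in the basis $\mathscr{M}_V$ of irreducibles, the entry is $F(M)_{W,N} = N_{M,N}^W$. A one-step expansion using associativity gives
\begin{equation*}
N_{M^{\boxtimes 2},N}^W = \sum_{U\in\mathscr{M}_V} N_{M,N}^U N_{M,U}^W = (F(M)^2)_{W,N},
\end{equation*}
and an easy induction on $r\geq 0$ yields $F(M^{\boxtimes r}) = F(M)^r$. For negative $r$, I interpret $M^{\boxtimes r}$ inside the abelian group of simple currents (using $M^{\boxtimes(-1)} = M^{\boxtimes(n-1)}$), after which the identity extends to all $r\in\Z$ as soon as I also know $F(M)^n = I$.

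Now since $M$ has order $n$, $M^{\boxtimes n} \cong V$, so $F(M)^n = F(V)$. But $F(V)_{W,N} = N_{V,N}^W = \delta_{N,W}$ because $V\boxtimes N \cong N$ for every irreducible $N$; hence $F(V)$ is the identity matrix and $F(M)^n = I$. In particular every eigenvalue of $F(M)$ is an $n$-th root of unity. By Theorem \ref{Verlinde}(4), $S$ diagonalizes $F(M)$ with spectrum $\{S_{M,N}/S_{V,N} : N\in\mathscr{M}_V\}$ (well defined thanks to Theorem \ref{Verlinde}(4) ensuring $S_{V,N}\neq 0$), so each $S_{M,N}/S_{V,N}$ is an $n$-th root of unity, as claimed.

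No serious obstacle is anticipated; the argument is essentially a two-line consequence of the Verlinde theorem. The only points to watch are that $F(V)$ is genuinely the identity (which uses $V$ being the unit of the fusion ring, hence the CFT-type assumption (V1) which guarantees $V$ is irreducible as a module over itself) and that the induction $F(M^{\boxtimes r}) = F(M)^r$ is stated at the level of Grothendieck classes, so that semisimplicity of the module category (i.e.\ rationality of $V$, which is part of the running hypothesis via Lemma \ref{general}) is implicitly needed to identify $M^{\boxtimes r}$ with its class in $\mathscr{M}_V$.
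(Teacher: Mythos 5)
Your proposal is correct and follows essentially the same route as the paper: the paper's proof also views $\boxtimes$ as making $\C[\mathscr{M}_V]$ a module for the group of simple currents (so that $F$ is multiplicative, $F(M)^n=F(M^{\boxtimes n})=F(V)=I$) and then reads off the eigenvalues $S_{M,N}/S_{V,N}$ from Theorem \ref{Verlinde}(4). You merely spell out the associativity computation and the identification $F(V)=I$ that the paper leaves implicit.
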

\begin{proof}  Consider the vector space  with a basis $\mathscr{M}_V.$ The $\boxtimes $ makes this space a module for the group of simple currents. The corresponding matrix of $M$ with respect to the basis
 $\mathscr{M}_V$ is exactly  $F(M).$ So the eigenvalues $\frac{S_{M,N}}{S_{V,N}}$ of $F(M)$ are the $n$-th root of unity.  The relation $F(M^{\boxtimes r})=F(M)^r$ is clear.
\end{proof}

\section{Classification of irreducible modules for $V^G$}
In this section we give a classification of irreducible modules of $V^G$ and show that any irreducible $V^G$ module occurs in an irreducible $g$-twisted $V$-module for some $g\in G.$

Let $M=(M,Y_M)$ be an irreducible $g$-twisted $V$-module.
We define a subgroup $G_M$ of $G$ consisting of $h\in G$ such that $M\circ h$ and $M$ are isomorphic.
As we mentioned in Section 2 there  is a projective
representation $h\mapsto \phi(h)$ of $G_M$ on $M$ such that
$$
\phi(h)Y_M(v,z)\phi(h)^{-1}=Y_M(hv,z)
$$
for $h\in G_M$ and $v\in V.$
Let $\a_M$  be the corresponding 2-cocycle in $C^2(G,\C^{\times}).$ Then $\phi(h)\phi(k)=\a_M(h,k)\phi(hk)$
for all $h,k\in G_M.$ We may  assume that $\alpha_M$ is unitary  \cite{C}. That is, there is a fixed positive integer $n$ such that  $\alpha_M(h,k)^n=1$ for all $h,k\in G_M.$  Let $\C^{\a_M}[G_M]=\oplus_{h\in G_M} \C\bar h$ be the twisted group algebra with product $\bar h\bar k=\alpha_M(h,k)\bar{hk}.$ It is well known that $\C^{\a_M}[G_M]$ is a semisimple associative algebra. It follows that $M$ is a $\C^{\a_M}[G_M]$-module.

 Note that $G_M$ is a subgroup of $C_G(g).$ We claim that $g$ lies in $G_M.$ From the definition, $M$ has a decomposition $M=\oplus_{n\in \frac{1}{T}\Z_+}M(n)$ and $M(0)\ne 0.$
 We define $\phi(g)$ acting on $M(n)$ as $e^{2\pi in}$ for all $n.$ It is easy to check that
$$
\phi(g)Y_M(v,z)\phi(g)^{-1}=Y_M(gv,z)
$$
for  $v\in V.$ This implies $g\in G_M.$ For $r=0,...,T-1$ let $M^r=\oplus_{n\in \frac{r}{T}+\Z}M(n).$ Then $M=\oplus_{r=0}^{T-1}M^r$
and each $M^r$ is an irreducible $V^{\< g\>}$-module. Clearly, $\phi(g)$ acts on each $M^r$ as constant $e^{2\pi ir/T}.$ This fact will be useful later.

Let $\Lambda_{G_M,\a_M}$ be the set of all irreducible characters $\lambda$
of  $\C^{\a_M}[G_M]$. Denote
the corresponding simple module by $W_{\l}.$
Using the fact  that $M$ is a semisimple
$\C^{\a_M}[G_M]$-module, we let $M^{\lambda}$ be the sum of simple
$\C^{\a_M}[G_M]$-submodules of $M$ isomorphic
to $W_{\l}.$ Then
$$M=\oplus_{\lambda\in \Lambda_{G_M,\a_M}}M^{\lambda}.$$
Moreover, $M^{\lambda}=W_{\lambda}\otimes M_{\l}$
where $M_{\l}=\hom_{\C^{\a_M}[G_M]}(W_{\l},M)$ is the multiplicity
of $W_{\l}$ in $M.$ As in \cite{DLM2}, we can,
in fact, realize $M_{\l}$ as a subspace of $M$ in the following way.  Let $w\in W_{\l}$
be a fixed nonzero vector. Then we can identify
$\hom_{\C^{\a_M}[G_M]}(W_{\l},M)$ with the subspace
$$\{f(w) |f\in \hom_{\C^{\a_M}[G_M]}(W_{\l},M)\}$$
 of $M^{\l}.$ This gives a decomposition
\begin{equation}\label{decom}
M=\oplus_{\lambda\in \Lambda_{G_M,\a_M}}W_{\l}\otimes M_{\lambda}
\end{equation}
 and each $M_{\l}$ is a module for vertex operator subalgebra $V^{G_M}$-module.

 \begin{thm}\label{mthm1} With the same notation as above we have:

(1) $M^{\l}$ is nonzero for any $\l\in \Lambda_{G_M,\a_M}.$

(2) Each $M_{\l}$ is an irreducible $V^{G_M}$-module.

(3) $M_{\l}$ and $M_{\gamma}$ are equivalent $V^{G_M}$-module if and only
if $\l=\gamma.$
\end{thm}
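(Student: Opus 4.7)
The plan is to establish a Schur--Weyl type duality between the projective $G_M$-action on $M$ via $\phi$ and the action of $V^{G_M}$, adapting the strategy of \cite{DM}, \cite{DLM2} and its twisted variants in \cite{DY}, \cite{MT} to the present general setup (note that here $G_M$ is merely the stabilizer of the isomorphism class of $M$, not all of $G$). The key algebraic fact is the intertwining relation $\phi(h)v_n\phi(h)^{-1}=(hv)_n$ for $h\in G_M$, $v\in V$, $n\in\frac{1}{T}\Z$; this implies that the subalgebra $A\subseteq \End M$ generated by the component operators $\{v_n\mid v\in V^{G_M},\ n\in\Z\}$ commutes with $\phi(G_M)$, hence preserves each isotypic component $M^\l=W_\l\otimes M_\l$ and acts as the identity on the $W_\l$-factor.

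First I would prove (1). Averaging the intertwining identity over $G_M$ gives
$$\frac{1}{|G_M|}\sum_{h\in G_M}\phi(h)v_n\phi(h)^{-1}=(\bar v)_n\qquad\text{with}\qquad \bar v=\frac{1}{|G_M|}\sum_{h\in G_M}hv\in V^{G_M}.$$
Since $M$ is irreducible as a $g$-twisted $V$-module, the full mode algebra of $V$ on $M$ has no proper invariant subspaces; restricting to each finite-dimensional generalized $L(0)$-eigenspace and invoking Jacobson density, the $V$-mode algebra fills $\End$ of that eigenspace. Taking $\phi(G_M)$-invariants (which is legitimate since averaging preserves the mode algebra by the identity above) shows that $A$ equals the commutant of $\phi(G_M)\cong \C^{\a_M}[G_M]$ on $M$ at the level of each graded piece. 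Because $\C^{\a_M}[G_M]$ is semisimple, every simple $\C^{\a_M}[G_M]$-module $W_\l$ must occur in $M$, so $M^\l\neq 0$.

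For (2), with the double commutant identification from Step 1, on each $M^\l=W_\l\otimes M_\l$ the algebra $A$ acts as $1\otimes\End M_\l$ restricted to each graded piece of $M_\l$. Since $A$ is precisely generated by the $V^{G_M}$-modes and $M_\l$ inherits an $\frac{1}{T}\Z_{+}$-grading from the $L(0)$-grading on $M$, this says exactly that $M_\l$ is an irreducible (admissible) $V^{G_M}$-module. For (3), suppose $\psi\colon M_\l\to M_\g$ is a $V^{G_M}$-isomorphism. Then $\Hom_{V^{G_M}}(M^\l,M^\g)\cong \Hom_\C(W_\l,W_\g)$ inherits a natural $\C^{\a_M}[G_M]$-action through $\phi$, and composing $\psi$ with any nonzero map $W_\l\to W_\g$ produces a nonzero element in $\Hom_{\C^{\a_M}[G_M]}(W_\l,W_\g)$; by Schur's lemma for the semisimple algebra $\C^{\a_M}[G_M]$ this forces $\l=\g$.

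The main obstacle is the density step in (1): because $M$ is infinite-dimensional, one cannot apply Jacobson density globally, and one must work degree-by-degree on the $L(0)$-eigenspace decomposition while checking that the averaging operation stays inside the $V^{G_M}$-mode algebra. Once this is handled, parts (2) and (3) are formal consequences of the standard Schur--Weyl dictionary for semisimple associative algebras; the only additional care needed is the observation that $g\in G_M$ (which was already verified in the excerpt via $\phi(g)|_{M(n)}=e^{2\pi i n}$), ensuring $V^{G_M}\subseteq V^{\<g\>}$ so that $V^{G_M}$-modes act with integer exponents and $M$ is genuinely a (non-twisted) $V^{G_M}$-module.
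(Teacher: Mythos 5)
The paper does not actually prove this theorem; it only remarks that the argument is ``similar to that of Theorem 5.4 of \cite{DY} using the twisted associative algebras $A_{g,n}(V)$'' and refers the reader there. Your proposal follows the same Schur--Weyl/double-commutant philosophy as that literature, but the step you present as the engine of the proof has a genuine gap, and it is exactly the gap that the $A_{g,n}(V)$ machinery of \cite{DLM5}, \cite{DLM6}, \cite{MT} exists to fill. The averaging identity $\frac{1}{|G_M|}\sum_{h}\phi(h)v_n\phi(h)^{-1}=(\bar v)_n$ only shows that the average of a \emph{single} mode lies in the $V^{G_M}$-mode algebra $A$. The commutant of $\C^{\a_M}[G_M]$ on a graded piece is spanned by averages of arbitrary elements of the degree-zero part of the full $V$-mode algebra, i.e.\ by operators of the form $\frac{1}{|G_M|}\sum_h (hu^1)_{n_1}\cdots(hu^k)_{n_k}$. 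These come from $(V^{\otimes k})^{G_M}$, which is strictly larger than $(V^{G_M})^{\otimes k}$, so they are not visibly in $A$; your parenthetical ``averaging preserves the mode algebra by the identity above'' does not cover them. Reducing such invariant products of modes to modes of elements of $V^{G_M}$ (via iterated use of the Jacobi/iterate identity, or equivalently via the uniform product property of the twisted Zhu algebras proved in \cite{MT} and Theorem 5.4 of \cite{DY}) is the real content of the theorem, not a formality.

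A second, smaller gap is in (1): even granting that $A$ equals the commutant of $\C^{\a_M}[G_M]$ on each graded piece, semisimplicity of $\C^{\a_M}[G_M]$ does not by itself force every simple module $W_\l$ to occur in $M$ --- a semisimple algebra can act on a module through a proper sum of its simple blocks. Showing $M^\l\neq 0$ for all $\l$ requires an additional argument using the simplicity of $V$ (this is a separate part of Theorem 5.4 of \cite{DY}, in the spirit of the quantum Galois theorem of \cite{DM}). Your treatment of (2) and (3), and your observation that $g\in G_M$ so that $M$ is an honest untwisted $V^{G_M}$-module, are fine once the commutant statement is in hand; I would rewrite the proof to either invoke the cited results directly, as the paper does, or to supply the normal-ordering reduction explicitly.
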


The proof of this theorem is similar to that of Theorem 5.4 of \cite{DY} using the twisted associative algebras
$A_{g,n}(V)$ \cite{DLM5}, \cite{DLM6}.  We refer the reader to \cite{DY} for details.

Recall that the group $G$ acts on set ${\cal S}=\cup_{g\in G}\mathscr{M}(g)$ and $ M\circ h$ and $M$ are isomorphic
$V^G$-modules for any $h\in G$ and $M\in {\cal S}.$ It is clear that the cardinality of the $G$-orbit $M\circ G$ of $M$ is
$[G:G_M].$

The following result comes from \cite{MT} (also see \cite{DLM2}, \cite{DY})  which is a generalization of Theorem \ref{mthm1}.

\begin{thm}\label{MT} Let $g,h\in G,$ $M$ an irreducible $g$-twisted $V$-module, $N$ an irreducible $h$-twisted $V$-module. Also assume that $M,N$ are not in the same orbit of ${\cal S}$ under the action of $G.$ Then

(1) Each $M_{\l}$ for $\lambda\in \Lambda_{G_M,\a_M}$ is an irreducible $V^G$-module.

(2) For any $\lambda\in \Lambda_{G_M,\a_M}$ and $\mu \in \Lambda_{G_N,\a_N},$ the irreducible $V^G$-modules
$M_{\l}$ and $N_{\mu}$ are inequivalent.
\end{thm}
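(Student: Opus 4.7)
The plan is to build on Theorem \ref{mthm1} by upgrading $V^{G_M}$-irreducibility to $V^G$-irreducibility through a double commutant argument, and then to distinguish $M_\lambda$ from $N_\mu$ via the $V^G$-constituents of each twisted module.

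For part (1), I would first establish the double commutant
\[
\End_{V^G}(M)=\C^{\alpha_M}[G_M].
\]
The inclusion $\supseteq$ is immediate since $\phi(h)$ for $h\in G_M$ commutes with every $Y_M(v,z)$ for $v\in V^{G_M}\supseteq V^G$. For the reverse inclusion, given $\psi\in \End_{V^G}(M)$ I would decompose $V$ into its $G$-isotypic components $V=\bigoplus_\chi V_\chi$ and track how $\psi$ intertwines the action of each $V_\chi$ on $M$; using that $M$ is irreducible as a $V$-module together with the projective $G_M$-action $\phi$, this forces $\psi\in \C^{\alpha_M}[G_M]$. Alternatively this can be argued via the $A_{g,n}(V)$-machinery of \cite{DLM5}, \cite{DLM6}, \cite{DY}, adapted from $G_M$ to $G$. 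Once the double commutant is established, the standard Schur-Weyl correspondence yields $M=\bigoplus_{\lambda\in\Lambda_{G_M,\alpha_M}} W_\lambda\otimes M_\lambda$ as a $(V^G,\C^{\alpha_M}[G_M])$-bimodule, with each $M_\lambda$ an irreducible $V^G$-module and pairwise inequivalent as $\lambda$ varies.

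For part (2), suppose $f:M_\lambda\to N_\mu$ is a $V^G$-isomorphism. By part (1), the $V^G$-constituents of $M$ are exactly $\{M_{\lambda'}\}_{\lambda'\in \Lambda_{G_M,\alpha_M}}$; and since $M\circ k\cong M$ as $V^G$-modules for every $k\in G$ (because $Y_{M\circ k}(v,z)=Y_M(v,z)$ for $v\in V^G$), this set is an invariant of the whole $G$-orbit of $M$. Therefore $N_\mu$ must coincide with some $M_{\lambda'}$. Extracting $W_\lambda$-isotypic components using orthogonality of the irreducible characters of $\C^{\alpha_M}[G_M]$ yields
\[
\chi_{M_\lambda}(\tau)=\frac{1}{|G_M|}\sum_{h\in G_M}\overline{\chi_\lambda(h)}\,Z_M(\1,(g,h),\tau),
\]
with an analogous identity for $\chi_{N_\mu}(\tau)$. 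Applying the $S$-transformation of Remark \ref{trans} together with the modular invariance of Theorem \ref{minvariance} and the linear independence of trace functions attached to inequivalent irreducible twisted modules, the equality $\chi_{M_\lambda}=\chi_{N_\mu}$ forces the $G$-orbits of $M$ and $N$ to coincide, contradicting the hypothesis.

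The main obstacle, I expect, is rigorously pinning down the reverse inclusion of the double commutant in part (1): ruling out ``spurious'' $V^G$-endomorphisms that could a priori mix $M$ with its $G$-translates requires careful use of the fact that $M$ is irreducible as a full $V$-module (not merely as $V^{G_M}$-module), together with the Galois-type duality of \cite{DM}, \cite{DLM2}. The remainder of the argument is then a formal consequence of Schur-Weyl together with the modular machinery from Section 2.
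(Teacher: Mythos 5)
The paper itself does not prove Theorem \ref{MT}: it cites \cite{MT} (also \cite{DLM2}, \cite{DY}), whose arguments run through the twisted associative algebras $A_{g,n}(V)$ and the $G$-twisted Zhu algebra --- one analyzes the truncations $\oplus_{m\le n}M(m)$ as modules over these finite-dimensional algebras and deduces from there both the $V^G$-irreducibility of each $M_\lambda$ and the inequivalence across distinct $G$-orbits. Your route is different in form, but at both of its crucial junctures it assumes precisely what that machinery exists to establish. In part (1), the identity $\End_{V^G}(M)=\C^{\alpha_M}[G_M]$ \emph{is} the theorem (given Theorem \ref{mthm1} and Schur--Weyl), and your justification of the hard inclusion $\End_{V^G}(M)\subseteq\C^{\alpha_M}[G_M]$ --- ``track how $\psi$ intertwines each $V_\chi$ \dots\ this forces $\psi\in\C^{\alpha_M}[G_M]$'' --- is a restatement of the claim, not an argument. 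Ruling out spurious $V^G$-endomorphisms requires a density/Jacobson-type theorem for twisted vertex operators (the analogue of the key lemma of \cite{DM}, \cite{DLM2}), and in the twisted setting this is exactly what the $A_{g,n}(V)$ analysis of \cite{DY}, \cite{MT} supplies; you have correctly named the obstacle but not removed it.

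Part (2) has two problems. First, equality of $q$-characters does not distinguish inequivalent irreducible modules (already for lattice vertex operator algebras, $V_{L+\beta}$ and $V_{L-\beta}$ have the same character), so you must compare $Z_{M_\lambda}(v,\tau)$ and $Z_{N_\mu}(v,\tau)$ for all $v\in V^G$, not just $v=\1$; that is a fixable slip. Second, and more seriously, the linear independence you invoke --- of the family $Z_P(v,(a,b),\tau)$ as $(a,b)$ ranges over $P(G)$ and $P$ over $\mathscr{M}(a,b)$ --- cannot be taken off the shelf here: for fixed $a$ and varying $b$ one has $Z_M(v,(a,b),\tau)=\sum_{\lambda}\lambda(b)Z_{M_\lambda}(v,\tau)$, so independence in the $b$-direction is equivalent to the pairwise inequivalence of the $M_\lambda$, i.e.\ to the statement being proved. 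Unless that independence is established by other means (which again comes down to the associative-algebra analysis of the top levels), the argument is circular. In sum, the proposal reduces the theorem to two statements each essentially equivalent in content and difficulty to the theorem itself, rather than proving it.
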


This result gives a complete classification of irreducible $V^G$-modules occurring in the irreducible $g$-twisted $V$-modules for $g\in G.$ We now use the modular invariance to prove that these irreducible $V^G$-modules are  the all irreducible $V^G$-modules.
\begin{thm}\label{MTH1} Any irreducible $V^G$-module is isomorphic to  $M_{\l}$ for some
irreducible $g$-twisted $V$-module $M$ and some $\lambda\in \Lambda_{G_M,\a_M}.$
\end{thm}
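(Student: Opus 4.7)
The plan is to compute $Z_{V^G}(v,-1/\tau)$ in two different ways and compare. First, since $V^G$ is rational and $C_2$-cofinite by (V3), applying Theorem \ref{minvariance} with trivial automorphism group, together with Theorem \ref{Verlinde} and Proposition \ref{pDLN} applied to $V^G$, yields
\begin{equation}\label{VGexp}
Z_{V^G}(v,-\tfrac{1}{\tau})=\tau^{\wt[v]}\sum_{W}S^{V^G}_{V^G,W}\,Z_W(v,\tau),
\end{equation}
where $W$ ranges over all inequivalent irreducible $V^G$-modules and each coefficient $S^{V^G}_{V^G,W}$ is strictly positive.

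Second, I would use the Schur-Weyl decomposition $V=\bigoplus_\chi W_\chi\otimes V_\chi$ as a $\C[G]\otimes V^G$-module from \cite{DM},\cite{DLM2} cited in the introduction. For $v\in V^G$ the zero mode $o(v)$ commutes with the $G$-action, so orthogonality of characters of $G$ gives
$$Z_{V^G}(v,\tau)=\frac{1}{|G|}\sum_{h\in G}Z_V(v,(1,h),\tau).$$
Since $V\in \mathscr{M}(1,h)$ for every $h\in G$, applying (\ref{S-tran2}) from Remark \ref{trans} term by term produces
$$Z_{V^G}(v,-\tfrac{1}{\tau})=\frac{\tau^{\wt[v]}}{|G|}\sum_{h\in G}\sum_{M\in\mathscr{M}(h)}S_{V,M}\,Z_M(v,\tau).$$
For each $M\in\mathscr{M}(h)$ and $v\in V^G\subset V^{G_M}$ the zero mode $o(v)$ preserves each summand $W_\lambda\otimes M_\lambda$ of \eqref{decom} and acts there as $I_{W_\lambda}\otimes o(v)|_{M_\lambda}$; hence
$$Z_M(v,\tau)=\sum_{\lambda\in\Lambda_{G_M,\alpha_M}}\dim(W_\lambda)\,Z_{M_\lambda}(v,\tau).$$
Substituting, $Z_{V^G}(v,-1/\tau)$ becomes $\tau^{\wt[v]}$ times a linear combination of the functions $Z_{M_\lambda}(v,\tau)$, where $M_\lambda$ ranges over precisely the irreducible $V^G$-modules produced by Theorem \ref{MT}.

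Finally, I would invoke linear independence of the trace functions $\{Z_W(v,\tau)\}_W$ over inequivalent irreducible $V^G$-modules---a standard consequence of modular invariance and the non-singularity of the $V^G$ $S$-matrix from Theorem \ref{Verlinde}---to compare with \eqref{VGexp}. Since every coefficient $S^{V^G}_{V^G,W}$ on the right of \eqref{VGexp} is strictly positive, each irreducible $V^G$-module $W$ must appear among the $M_\lambda$'s furnished by Theorem \ref{MT}, which is the desired classification. The main technical hurdle is this concluding linear-independence step: one needs to verify that the trace functions on $V^G\times\H$ carry enough information to separate all inequivalent irreducibles, and that multiple occurrences of the same $M_\lambda$ (coming from $G$-conjugate $M$'s, or from different $h$) combine into a single strictly nonzero coefficient rather than cancelling. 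The positivity supplied by Proposition \ref{pDLN}, applied on both sides of the comparison, is what rules out such hidden cancellations and forces completeness of the list $\{M_\lambda\}$.
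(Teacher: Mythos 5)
Your proposal is correct and takes essentially the same route as the paper's proof: expand $Z_{V^G}(v,-1/\tau)$ once via the $S$-matrix of $V^G$ (with $S_{V^G,W}\neq 0$ by \cite{H}) and once via $Z_{V^G}=\frac{1}{|G|}\sum_{h}Z_V(v,(1,h),\tau)$ together with (\ref{S-tran2}) and the decomposition $Z_M=\sum_\lambda \dim W_\lambda\, Z_{M_\lambda}$, then compare using linear independence of the $Z_W$. Your closing worry about cancellation is unnecessary: since the second expansion lies in the span of the $Z_{M_\lambda}$, linear independence and $S_{V^G,W}\neq 0$ already force every irreducible $W$ to equal some $M_\lambda$, regardless of whether coefficients combine or cancel.
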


\begin{proof} Recall from Section 2.2 about the $S$-matrix for vertex operator algebra $V^G.$ We know that
$$Z_{V^G}(v,-\frac{1}{\tau})=\tau^{\wt[v]}\sum_{W\in  \mathscr{M}_{V^G}}S_{V^G,W}Z_W(v,\tau)$$
for $v\in V^G$ and  $Z_W(v,\tau)$ are linearly independent \cite{Z}.
According to \cite{H}, $S_{V^G,W}\ne 0$  for all $W.$ So it is enough to show
that $\tau^{-\wt[v]}Z_{V^G}(v,-\frac{1}{\tau})$ is a linear combination of $Z_W(v,\tau)$ for $W$ occurring in irreducible
twisted $V$-modules.

From the definition of $Z_{V^G}(v,\tau)$ we know that for $v\in V^G$
$$Z_{V^G}(v,\tau)=\frac{1}{|G|}\sum_{g\in G}Z_V(v,(1,g),\tau).$$
Using (\ref{S-tran2}) and the orthogonality property of the irreducible characters of $G$
gives
$$Z_{V^G}(v,-\frac{1}{\tau})=\frac{\tau^{\wt[v]}}{|G|}\sum_{g\in G}\sum_{M\in \mathscr{M}(g)} S_{V,M} Z_{M}(v,\tau).$$
By Theorems \ref{mthm1} and \ref{MT},
$$Z_M(v,\tau)=\sum_{\lambda\in \Lambda_{G_M,\alpha_M}}\dim W_{\l}Z_{M_{\l}}(v,\tau)$$
and the proof is complete.
 \end{proof}

\begin{rem}In the proof of Theorem \ref{MTH1} we did not use the assumption (V4), but we did need to assume that $V^G$ is self-dual.
\end{rem}

The following result is an immediate consequence of Theorem \ref{MT} and assumption (V4).
\begin{coro} The weight of every irreducible $V^G$-module is positive except $V^G$ itself.
\end{coro}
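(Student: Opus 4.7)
The plan is to combine Theorem \ref{MTH1} with the fact that the multiplicity space $M_{\lambda}$ sits inside $M$ as an $L(0)$-stable subspace, and then separate into two cases according to whether the ambient twisted module $M$ is isomorphic to the untwisted vertex operator algebra $V$ itself.

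By Theorem \ref{MTH1}, any irreducible $V^G$-module is of the form $M_{\lambda}$ for some $g\in G$, some irreducible $g$-twisted $V$-module $M$, and some $\lambda\in \Lambda_{G_M,\alpha_M}$. Since $L(0)$ commutes with the projective $G_M$-action $\phi(h)$ on $M$ (because $\phi(h)$ intertwines the vertex operators and the conformal vector $\omega$ is $G$-invariant), the realization of $M_{\lambda}$ as a subspace of $M$ via (\ref{decom}) is $L(0)$-stable. Hence the conformal weight of $M_{\lambda}$ is at least the conformal weight of $M$.

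I would then split into two cases. If $M\not\cong V$ as a (possibly twisted) $V$-module, assumption (V4) gives that the conformal weight of $M$ is strictly positive, and therefore so is that of $M_{\lambda}$. If $M\cong V$, then $g=1$, $G_M=G$, and the $2$-cocycle $\alpha_M$ is trivial because $G$ acts on $V$ by honest vertex operator algebra automorphisms. In this case (\ref{decom}) reduces to the familiar isotypic decomposition $V=\oplus_{\lambda\in \Irr(G)} W_{\lambda}\otimes V_{\lambda}$ of $V$ as a $\C[G]$-module. The trivial character gives $V_{\lambda}=V^G$, which is precisely the excluded case. For any nontrivial $\lambda$, the isotypic summand $W_{\lambda}\otimes V_{\lambda}$ cannot contain the vacuum $\mathbf{1}$ (which is $G$-invariant and thus lies in the trivial isotypic piece), and since $V$ is of CFT type with $V_0=\C \mathbf{1}$ this forces $W_{\lambda}\otimes V_{\lambda}\subseteq \oplus_{n\geq 1}V_n$. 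Consequently $V_{\lambda}$ has conformal weight at least $1$.

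There is no real obstacle here: the corollary is essentially an immediate consequence of Theorem \ref{MTH1} together with assumption (V4). The only mild care needed is that (V4) explicitly excludes the untwisted module $V$ itself, so that branch must be treated separately by invoking the CFT-type hypothesis to isolate the vacuum in the trivial isotypic component.
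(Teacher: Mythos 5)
Your proof is correct and follows essentially the same route as the paper, which simply declares the corollary an immediate consequence of the classification of irreducible $V^G$-modules inside twisted modules together with assumption (V4). Your write-up just fills in the details the paper leaves implicit (the $L(0)$-stability of $M_{\lambda}$ and the separate treatment of $M\cong V$ via the CFT-type hypothesis), and both of those details are handled correctly.
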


This result allows us to use various  results on the quantum dimensions  for vertex operator algebra
$V^G$ \cite{DJX}.

We now discuss the modularity of $Z_M(v,(g,h),\tau).$ Recall from \cite{DLN} that for any $v\in V,$
and $\l\in\Lambda_{G_M,\a_M}, $ $Z_{M_\l}(v,\tau)$ is a modular form of weight $\wt[v]$ over a congruence
subgroup $A$ of $\Gamma.$
\begin{prop}Let $V,$ $G,$ $g,$ be as before. Then for any $v\in V$ and $h\in G_M,$ $Z_M(v,(g,h),\tau)$
is a modular form of weight $\wt[v]$ over $A.$ In particular, $\chi_M(\tau)$ is a modular function on $A.$
\end{prop}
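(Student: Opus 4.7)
The plan is to reduce the proposition to the modularity result of \cite{DLN} for $Z_{M_\lambda}(v,\tau)$ by establishing the identity
\begin{equation*}
Z_M(v,(g,h),\tau) \;=\; \sum_{\lambda \in \Lambda_{G_M,\alpha_M}} \chi_\lambda(h)\, Z_{M_\lambda}(v,\tau),
\end{equation*}
where $\chi_\lambda(h) = \tr_{W_\lambda}\rho_\lambda(h)$ is the trace of $h$ in the projective representation $\rho_\lambda$ of $\mathbb{C}^{\alpha_M}[G_M]$ on $W_\lambda$. Once this identity is in hand, the proposition follows at once, since any finite $\mathbb{C}$-linear combination of modular forms of weight $\wt[v]$ on the congruence subgroup $A$ is again such a modular form, and the assertion about $\chi_M(\tau)$ is the special case $v=\mathbf{1}$ (weight zero).

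To prove the identity for $v \in V^{G_M}$, I would start from the decomposition (\ref{decom}), $M = \bigoplus_\lambda W_\lambda \otimes M_\lambda$. Because $L(0) \in V^G \subseteq V^{G_M}$ commutes with every $\varphi(k)$ for $k \in G_M$, it preserves each summand and acts as $1 \otimes L(0)|_{M_\lambda}$. The operator $\varphi(h)$ preserves each summand and acts as $\rho_\lambda(h) \otimes 1$ by the very definition of $W_\lambda$. For $v \in V^{G_M}$, the zero-mode $o(v)$ is $G_M$-equivariant (since $\varphi(k)o(v)\varphi(k)^{-1} = o(kv) = o(v)$), so by Schur's lemma it acts as $1 \otimes o(v)|_{M_\lambda}$ on each summand. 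The trace of $o(v)\varphi(h)q^{L(0)-c/24}$ over $M$ therefore factors summand-by-summand as $\chi_\lambda(h)\cdot Z_{M_\lambda}(v,\tau)$, which gives the identity. For an arbitrary $v \in V$, I would decompose $v = \sum_{\mu\in\Irr(G_M)} v^{(\mu)}$ into $G_M$-isotypic pieces and match the contributions from nontrivial isotypes using cyclicity of trace combined with averaging over $C_{G_M}(h)$, the commutation $[\varphi(k),L(0)]=0$, and the cocycle relation $\varphi(k)\varphi(h)\varphi(k)^{-1} \propto \varphi(h)$ valid for $k \in C_{G_M}(h)$.

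The hardest step will be precisely this extension from $v\in V^{G_M}$ to general $v\in V$: for $v$ in a nontrivial $G_M$-isotype, $o(v)$ no longer preserves the decomposition (\ref{decom}), so the clean tensor-factored trace computation breaks down and one must carefully track the diagonal blocks $P_\lambda o(v)P_\lambda$ together with the 2-cocycle $\alpha_M$ when matching against the definition of $Z_{M_\lambda}(v,\tau)$ adopted in \cite{DLN}. Once this bookkeeping is carried out, the modularity of $Z_M(v,(g,h),\tau)$ as a modular form of weight $\wt[v]$ on $A$ follows immediately from the corresponding result for the $Z_{M_\lambda}(v,\tau)$.
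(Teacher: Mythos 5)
Your proposal is correct and follows the paper's own route: the entire published proof consists of the single identity $Z_M(v,(g,h),\tau)=\sum_{\lambda\in\Lambda_{G_M,\alpha_M}}\lambda(h)Z_{M_\lambda}(v,\tau)$ (which you derive in detail from the decomposition $M=\oplus_\lambda W_\lambda\otimes M_\lambda$) followed by the citation of \cite{DLN} for the modularity of each $Z_{M_\lambda}(v,\tau)$. The extension to $v\notin V^{G_M}$ that you flag as the hardest step is not addressed in the paper at all; in practice it is disposed of by noting that $o(v)$ shifts the $\frac{1}{T}\Z$-grading unless $gv=v$, and that cyclicity of the trace replaces $v$ by its average over $\langle h\rangle$, so the trace function vanishes (hence is trivially modular) outside the invariant part.
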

\begin{proof} Using the decomposition we see that
$$Z_M(v,(g,h),\tau)=\sum_{\lambda\in\Lambda_{G_M,\a_M}}\lambda(h)Z_{M_\l}(v,\tau).$$
The result follows.
\end{proof}

\section{Quantum dimensions}
In this section we compute the quantum dimensions of the irreducible $g$-twisted $V$-modules and irreducible $V^G$-modules. We also obtain  relations between the global dimensions of $V$ and $V^G,$
and the global dimension of $V$ and the quantum dimensions of irreducible $g$-twisted $V$-modules.

\subsection{Definition}
Let $V$ and $G$ be as before. Let $g\in G$ and $M$ a $g$-twisted $V$-module. Then $M$ is a finite sum of irreducible $g$-twisted $V$-modules. In particular, each homogeneous subspace of $M$ is finite dimensional.

From the discussion before we know $\chi_V(\tau)$ and $\chi_M(\tau)$ are holomorphic functions on $\H.$ The quantum dimension of $M$ over $V$ is defined to be
$$\qdim_{V}M=\lim_{y\to 0}\frac{\chi_M(iy)}{\chi_V(iy)}$$
where $y$ is real and positive. In the case $g=1$ this is exactly the definition of the quantum dimension
of a $V$-module given in \cite{DJX}. Alternatively, we can define quantum dimension as
$$
\qdim_VM=\lim_{q\to 1^-}\frac{\ch_qM}{\ch_q V}$$
using the relation $q=e^{2\pi i\tau}.$

Note that it is not obvious that the $\qdim_{V}M$ exists from the definition. But for any $V$-module $M,$
$\qdim_{V}M$ always exists and is greater than or equal to 1 \cite{DJX}. We will prove later that for any $g$
and any $g$-twisted $V$-module $M,$ $\qdim_{V}M$  always exists. It is clear that $\qdim_{V}M$ is nonnegative. Also, $\qdim_V(h\circ M)=\qdim_VM$
for any $h\in G$ and $\qdim_V M=\qdim_V M'.$

We now see two examples. First consider the Heisenberg \voa $M(1)$  constructed from the vector space $H$ of dimension $d$ and with a nondegenerate symmetric bilinear form. The quantum dimension of any irreducible $M(1)$-module
has been computed in \cite{DJX}. We now consider the automorphism $\theta$ of $M(1)$ induced from the $-1$
linear map on $H.$  Then $M(1)$ has a unique irreducible $\theta$-twisted module $M(1)(\theta)$ (see \cite{FLM2} for the details). The following character formula for $M(1)(\theta)$ is well known:
$$
\chi_{M(1)}(q)=\frac{1}{\eta(q)^d},\ \chi_{M(1)(\theta)}(q)=\frac{\eta(q)^d}{\eta(q^{1/2})^d}
$$
where
$$\eta(q)=q^{\frac{1}{24}}\prod _{n\geq 1}(1-q^n).$$
Using the well known transformation formula
$$\eta(-1/\tau)=(-i\tau)^{1/2}\eta(\tau)$$
gives
\begin{equation*}
\begin{split}
\qdim_{M(1)} M(1)(\theta)=&\lim_{y\to 0}\frac{\eta(iy)^{2d}}{\eta(\frac{iy}{2})^d}\\
=&\lim_{y\to \infty}\frac{\eta(\frac{-1}{iy})^{2d}}{\eta(\frac{-1}{iy2})^d}\\
=&\lim_{y\to \infty}\frac{y^d\eta(iy)^{2d}}{(2y)^{\frac{d}{2}}\eta(2iy)^d}\\
=&\infty.
\end{split}
\end{equation*}
Note that both $M(1)$  and the $\theta$-invariants
$M(1)^+$ are not rational and infinitely many irreducible modules.  Although the results later on quantum dimensions do not apply to $M(1)$ and $M(1)^+.$ But they may explain why $\qdim_{M(1)} M(1)(\theta)$ is infinity.

Our second example is the lattice vertex operator algebra $V_L$ where $L=\Z\alpha$ is an even positive definite lattice with $(\alpha,\alpha)=2k$ for $k\geq 1.$ We know from \cite{D1} and \cite{DLM3} that $V_L$ is rational
and irreducible modules are $V_{L+\frac{r}{2k}\alpha}$ for $r=0,...,2k-1.$ Again the $-1$ isometry of $L$ induces an automorphism of $V_L.$ In this case $V_L$ has exactly two irreducible $\theta$-twisted modules
$V_L^{T_i}=M(1)(\theta)\otimes T_i$ for $i=0,1$ where $T_i=\C$ are 1-dimensional $\Z\alpha$-module such that $\alpha$ acts as $(-1)^i$ \cite{FLM2}, \cite{D2}.

We know again from \cite{DJX} that each irreducible $V_{L+\frac{r}{2k}\alpha}$ has quantum dimension 1. The character of $V_L$ is given by
$$\chi_{V_L}(q)=\frac{\theta_L(q)}{\eta(q)}$$
where the theta function is defined as
$$\theta_{L}(q)=\sum_{n\in \Z}q^{(n\alpha,n\alpha)/2}=\sum_{n\in \Z}q^{n^2k}$$
with the transformation law
$$\theta_{L}(-1/\tau)=\frac{1}{\sqrt{2k}}(-i\tau)^{1/2}\theta_{L^{\circ}}(\tau)$$
\cite{S}.
Notice that $L^{\circ}=\frac{1}{2k}\Z\alpha$ is the dual lattice of $L.$
Thus
\begin{equation*}
\begin{split}
\qdim_{V_L}V_L^{T_i}=&\lim_{y\to 0} \frac{\eta(iy)^2}{\theta_L(iy)\eta(\frac{iy}{2})}\\
=&\lim_{y\to \infty} \frac{\eta(\frac{-1}{iy})^2}{\theta_L(\frac{-1}{iy})\eta(\frac{-1}{2iy})}\\
=&\lim_{y\to \infty}\sqrt{k}\frac{\eta(iy)^2}{\theta_{L^{\circ}}(iy)\eta(2iy)}\\
=&\sqrt{k}.
\end{split}
\end{equation*}

In both examples we use the transformation law of certain modular forms to compute the quantum dimension. But the quantum dimension $\qdim_{V_L}V_L^{T_i}$ will also follow a general result on rational vertex operator algebra.

\subsection{Quantum dimensions of twisted modules}

The existence of the quantum dimension for a $g$-twisted $V$-module is established in this subsection. In fact， we give a explicit formula of the quantum dimension in terms of the $S$-matrix.

\begin{prop}\label{tqdim}
Let $V$ and $G$ be as before, and $M$ a $g$-twisted $V$-module for some $g\in G.$ Then $\qdim_VM$ exists.
If $M$ is irreducible then $\qdim_VM=\frac{S_{M,V}}{S_{V,V}}.$
\end{prop}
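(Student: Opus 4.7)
The plan is to apply the $S$-transformation (\ref{S-tran1}) with $v=\mathbf{1}$ (so $\wt[v]=0$) and extract the leading asymptotic behavior of $\chi_M(iy)$ and $\chi_V(iy)$ as $y\to 0^+$. Specializing (\ref{S-tran1}) to the vacuum gives
\[
\chi_M(-1/\tau)=\sum_{N\in\mathscr{M}(1,g^{-1})}S_{M,N}\,\tr_N\varphi(g^{-1})q^{L(0)-c/24},
\]
with the analogous expression for $\chi_V(-1/\tau)$ with sum over $\mathscr{M}_V=\mathscr{M}(1,1)$. Since $V$ is automatically $h$-stable for every $h\in G$, the module $V$ appears in both $\mathscr{M}(1,g^{-1})$ and $\mathscr{M}_V$.

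Next, setting $\tau=i/y$ so that $-1/\tau=iy$ and $q=e^{-2\pi/y}\to 0$ as $y\to 0^+$, each summand on the right is asymptotic to $(\tr_{N_{\lambda_N}}\varphi(g^{-1}))\,q^{\lambda_N-c/24}$, where $\lambda_N$ is the conformal weight of $N$. Assumption (V4) applied at $g=1$ forces $\lambda_N>0$ for every untwisted irreducible $N\ne V$, while $\lambda_V=0$ and, after normalizing $\varphi(g^{-1})|_V=g^{-1}$ so that $\varphi(g^{-1})\mathbf{1}=\mathbf{1}$, we have $\tr_{V_0}\varphi(g^{-1})=1$. Hence the $N=V$ term strictly dominates both sums, giving
\[
\chi_M(iy)\sim S_{M,V}\,e^{\pi c/(12y)},\qquad \chi_V(iy)\sim S_{V,V}\,e^{\pi c/(12y)},
\]
with all other contributions exponentially suppressed by some $e^{-2\pi\epsilon/y}$, $\epsilon>0$.

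Dividing and taking the limit, the dominant exponentials cancel, yielding $\qdim_VM=S_{M,V}/S_{V,V}$; this ratio is well-defined because Proposition \ref{pDLN} gives $S_{V,V}>0$. For a general, not necessarily irreducible, $g$-twisted $V$-module $M$, the $g$-rationality supplied by Lemma \ref{general} gives a decomposition into finitely many irreducibles, and linearity of the character in this decomposition shows the limit exists as a finite sum of irreducible quantum dimensions.

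The main technical point is ensuring the $N=V$ contribution is strictly dominant, which rests on assumption (V4) ruling out any other untwisted irreducible with conformal weight $0$, together with the finiteness of $\mathscr{M}(1,g^{-1})$ from Theorem \ref{grational}(2). A secondary concern is the scalar ambiguity in $\varphi(g^{-1})$ built into the definition of $Z_N(v,(1,g^{-1}),\tau)$: this is handled by the canonical choice $\varphi(g^{-1})|_V=g^{-1}$ on the module $V$, and the $S$-matrix entries $S_{M,N}$ are then defined compatibly so that the final answer $S_{M,V}/S_{V,V}$ is unambiguous.
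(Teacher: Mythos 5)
Your proposal is correct and follows essentially the same route as the paper's proof: reduce to the irreducible case via $g$-rationality, apply (\ref{S-tran1}) with $v=\1$, and use (V4) to see that only the $N=V$ term survives in the limit, so that $\lim_{y\to\infty}Z_N(\1,(1,g^{-1}),iy)/\chi_V(iy)=\delta_{V,N}$. Your extra remarks on normalizing $\varphi(g^{-1})|_V$ and on $S_{V,V}>0$ are points the paper leaves implicit, but they do not change the argument.
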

\begin{proof}
The proof is similar to that of Lemma 4.2 of \cite{DJX}. Since $M$ is a direct sum of finitely many irreducible $g$-twisted $V$-modules, we can assume that $M$ is irreducible. Using (\ref{S-tran1}) with $v=\1$
we have
\begin{equation}\label{qdim S}
\begin{split}
\qdim_{V} M=&\lim_{y\to 0}\frac{\chi_{M}(iy)}{\chi_V(iy)}\\
=&\lim_{ y\to \infty}\frac{\chi_{M}(-\frac{1}{iy})}{\chi_{V}(-\frac{1}{iy})}\\
=&\lim_{y \to \infty}\frac{\sum_{N\in \mathscr{M}(1,g^{-1}) }S_{M,N} Z_{N}(\1,(1,g^{-1}),iy)}{\sum_{N\in \mathscr{M}_V} S_{V,N}\chi_{N}(iy)}\\
=&\frac{S_{M,V}}{S_{V,V}}
\end{split}
\end{equation}
where we have used the fact that $lim_{y\to \infty}\frac{Z_{N}(\1,(1,g^{-1}),iy)}{\chi_{V}(iy)}=\delta_{V,N}$
as the conformal weight of $N$ is positive if $N\ne V.$
\end{proof}

\begin{rem}\label{positivity} Although $S_{V,V}>0$ \cite{H}, \cite{DLN}, but it is not clear at this point that $S_{M,V}$ is positive. We will show in the next subsection that $S_{M,V}$ is always positive. In other words,
$\qdim_VM$ is always positive.
\end{rem}

\subsection{Quantum dimensions of $V^G$-modules}

In this subsection we compute the quantum dimensions of irreducible $V^G$-modules

We first need a result on the projective representations of a finite group $K.$ Let $\alpha$ be a unitary
2-cocycle of $K.$ That is, $\alpha(a,b)\in \<\kappa\>$ for all $a,b\in K$ where $\kappa=e^{2\pi i/n}.$
 Let $\hat K=K\times \<\kappa\>.$
 Then $\hat K$ is a finite group with product $(a, \kappa^s)(b,\kappa^t)=(ab,\alpha(a,b)\kappa^{s+t})$
 for $a,b\in K$ and $s,t\in\Z.$   Then $\hat K$ is a central extension of $K$ and the twisted group algebra $\C^{\alpha}[K]$ can be regraded as the quotient of $\C[\hat K]$ by identifying the abstract group element $\kappa$ with $e^{2\pi i/n}.$ Let $\chi_1, \chi_2$ be two irreducible characters of $\C^{\alpha}[K].$
 \begin{lem}\label{tga}
 Let $\chi_1, \chi_2$ be two irreducible characters of $\C^{\alpha}[K].$ The following orthogonal relation
 holds:
 $$\frac{1}{|K|}\sum_{a\in K}\chi_1(a)\overline{\chi_2(a)}=\delta_{\chi_1,\chi_2}.$$
 \end{lem}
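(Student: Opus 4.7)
The plan is to deduce this orthogonality from the ordinary Schur orthogonality relations applied to the finite group $\hat{K}=K\times\langle\kappa\rangle$, exploiting the description of $\mathbb{C}^{\alpha}[K]$ as a quotient of $\mathbb{C}[\hat{K}]$ already given in the preceding paragraph. An irreducible projective representation of $K$ with cocycle $\alpha$ is the same data as an ordinary irreducible representation of $\hat{K}$ on which the central element $\kappa$ acts by the scalar $e^{2\pi i/n}$; thus each irreducible character $\chi$ of $\mathbb{C}^{\alpha}[K]$ lifts uniquely to an irreducible character $\hat\chi$ of $\hat{K}$ via
\[
\hat\chi(a,\kappa^s)=\kappa^{s}\chi(a),\qquad a\in K,\ 0\le s\le n-1.
\]

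Next I would apply the standard orthogonality of irreducible characters for $\hat{K}$:
\[
\frac{1}{|\hat{K}|}\sum_{x\in\hat{K}}\hat\chi_1(x)\overline{\hat\chi_2(x)}=\delta_{\hat\chi_1,\hat\chi_2}.
\]
Splitting the sum over $\hat{K}=K\times\langle\kappa\rangle$ and using $\hat\chi_1(a,\kappa^s)\overline{\hat\chi_2(a,\kappa^s)}=\kappa^{s}\overline{\kappa^{s}}\,\chi_1(a)\overline{\chi_2(a)}=\chi_1(a)\overline{\chi_2(a)}$ together with $|\hat{K}|=n|K|$, the left-hand side collapses to
\[
\frac{1}{n|K|}\cdot n\sum_{a\in K}\chi_1(a)\overline{\chi_2(a)}=\frac{1}{|K|}\sum_{a\in K}\chi_1(a)\overline{\chi_2(a)},
\]
which is exactly the sum appearing in the lemma.

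Finally I would verify that $\hat\chi_1=\hat\chi_2$ as characters of $\hat{K}$ if and only if $\chi_1=\chi_2$ as characters of $\mathbb{C}^{\alpha}[K]$: one direction is trivial by restriction to $K\times\{1\}$, and the other follows because the lift formula shows $\hat\chi$ is entirely determined by $\chi$. Hence the right-hand side $\delta_{\hat\chi_1,\hat\chi_2}$ equals $\delta_{\chi_1,\chi_2}$, completing the argument.

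There is no real obstacle here; the only point requiring care is the observation that the unitarity of $\alpha$ ensures $\hat K$ is genuinely a finite group (so the ordinary orthogonality relations apply) and that the central element $\kappa$ is represented by the scalar $e^{2\pi i/n}$ in every representation descending to $\mathbb{C}^{\alpha}[K]$, which is what forces the cancellation $\kappa^{s}\overline{\kappa^{s}}=1$ in the sum.
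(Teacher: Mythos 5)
Your proof is correct and follows essentially the same route as the paper: both regard $\chi_1,\chi_2$ as irreducible characters of the central extension $\hat K=K\times\langle\kappa\rangle$, apply ordinary Schur orthogonality there, and use the cancellation $\kappa^s\overline{\kappa^s}=1$ together with $|\hat K|=n|K|$ to collapse the sum. Your write-up merely supplies more detail (the explicit lift formula and the check that $\hat\chi_1=\hat\chi_2$ iff $\chi_1=\chi_2$) than the paper's two-line argument.
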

 \begin{proof} We can regard $\chi_1, \chi_2$ as irreducible characters of $\hat K$ in an obvious way. Then
 $$\frac{1}{|\hat K|}\sum_{a\in K, 0\leq s\leq n-1 }\chi_1(a)\chi_1(\kappa)^s\overline{\chi_2(a) k^{s}}=\delta_{\chi_1,\chi_2}.$$
 The result follows immediately by noting that $|\hat K|=n|K|.$
\end{proof}
We are now in a position to compute the quantum dimensions of irreducible $V^G$-modules.
\begin{thm}\label{MTH} Let $V,G, g$ be as before, $M$ an irreducible $g$-twisted $V$-module and $\lambda\in \Lambda_{G_M,\a_M}.$ We have
\begin{equation}S_{M_\l,V^G}=\frac{\dim W_{\l}}{|G_M|}S_{M,V},
\end{equation}
\begin{equation}\label{eqq1}
\qdim_{V^G}M_{\l}=[G:G_M]\dim W_{\l}\qdim_{V}M.
\end{equation}
Moreover, $\qdim_{V}M$ takes values in $\{2\cos\frac{\pi}{n}|n\geq 3\}\cup [2,\infty)$ and $S_{M,V}$ is positive.
\end{thm}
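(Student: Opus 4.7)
The plan is to compute $Z_{M_\l}(v,-1/\tau)$ for $v\in V^G$ in two different ways and match coefficients against the linearly independent functions $\{Z_W(v,\tau):W\in\mathscr{M}_{V^G}\}$ guaranteed by \cite{Z}. One side is the $V^G$-modular expansion $\tau^{\wt[v]}\sum_W S_{M_\l,W}Z_W(v,\tau)$; the other side routes through the twisted characters $Z_M(v,(g,h),\tau)$ via Theorem~\ref{minvariance}, trading the unknown entry $S_{M_\l,V^G}$ for the known entries $S_{M,N}$ on the twisted side.

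The bridge is the isotypic decomposition (\ref{decom}). For $v\in V^G\subset V^{G_M}$ the operator $o(v)$ commutes with every $\phi(h)$, $h\in G_M$, so taking the trace of $o(v)\phi(h)q^{L(0)-c/24}$ on $M=\bigoplus_\mu W_\mu\otimes M_\mu$ yields
$$Z_M(v,(g,h),\tau)=\sum_{\mu\in\Lambda_{G_M,\a_M}}\mu(h)\,Z_{M_\mu}(v,\tau).$$
Lemma~\ref{tga} inverts this to
$$Z_{M_\l}(v,\tau)=\frac{1}{|G_M|}\sum_{h\in G_M}\overline{\l(h)}\,Z_M(v,(g,h),\tau).$$
Applying the $S$-transformation of Theorem~\ref{minvariance}, noting $(g,h)S=(h,g^{-1})$, and re-expanding each resulting $Z_N(v,(h,g^{-1}),\tau)$ by the same trick, I obtain
$$Z_{M_\l}(v,-1/\tau)=\frac{\tau^{\wt[v]}}{|G_M|}\sum_{h\in G_M}\overline{\l(h)}\sum_{N\in\mathscr{M}(h,g^{-1})}S_{M,N}\sum_{\mu\in\Lambda_{G_N,\a_N}}\mu(g^{-1})Z_{N_\mu}(v,\tau).$$

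Now I extract the coefficient of $Z_{V^G}(v,\tau)$. By Theorem~\ref{MT}, the $V^G$-module $V^G=V_{\mathrm{triv}}$ arises from the unique pair $(N,\mu)=(V,\mathrm{triv})$, and since $V\in\mathscr{M}(1)$ the condition $N\in\mathscr{M}(h,g^{-1})$ forces $h=1$. Using $\overline{\l(1)}=\dim W_\l$ and $\mathrm{triv}(g^{-1})=1$, comparison with the $V^G$-side delivers
$$S_{M_\l,V^G}=\frac{\dim W_\l}{|G_M|}S_{M,V}.$$
Specializing to $M=V$, $\l=\mathrm{triv}$ gives $S_{V^G,V^G}=S_{V,V}/|G|$, and combining with Proposition~\ref{tqdim} applied to both $V^G$ and $V$ produces
$$\qdim_{V^G}M_\l=\frac{S_{M_\l,V^G}}{S_{V^G,V^G}}=[G:G_M]\dim W_\l\,\qdim_VM.$$

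Positivity of $S_{M,V}$ is then immediate: $S_{M_\l,V^G}>0$ by Proposition~\ref{pDLN} applied to the rational vertex operator algebra $V^G$, while $\dim W_\l,|G_M|>0$. For the final Jones-set assertion, my plan is to invoke Jones' theorem on subfactor indices, reading $\qdim_VM$ as the Perron--Frobenius eigenvalue of a nonnegative integer fusion matrix arising from tensoring with $M$ in the (extended) category of twisted modules; such eigenvalues are forced into $\{2\cos(\pi/n):n\geq3\}\cup[2,\infty)$. The main obstacle I anticipate is making precise the relevant unitary ($G$-crossed, braided) fusion-category framework on the twisted modules so that the categorical and character-theoretic definitions of $\qdim_VM$ agree; modulo this categorical input, the rest of the argument is essentially modular bookkeeping combined with the projective-character orthogonality of Lemma~\ref{tga}.
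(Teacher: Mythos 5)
Your derivation of the two displayed formulas is essentially the paper's own proof: the same inversion of the isotypic decomposition via Lemma \ref{tga}, the same $S$-transformation through Theorem \ref{minvariance}, the same extraction of the coefficient of $Z_{V^G}(v,\tau)$ (with only $(N,\mu)=(V,\mathrm{triv})$, hence $h=1$, contributing), and the same specialization $S_{V^G,V^G}=S_{V,V}/|G|$ feeding into Proposition \ref{tqdim}. Your positivity argument is a small but genuine improvement in routing: you get $S_{M,V}>0$ directly from $S_{M_\l,V^G}>0$ (Proposition \ref{pDLN} applied to $V^G$) and the first formula, whereas the paper deduces it from the positivity of $\qdim_V M$, which it only obtains after the Jones-set claim.

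The gap is in the last assertion. Invoking Jones' theorem on a Perron--Frobenius eigenvalue of a fusion matrix in a unitary $G$-crossed braided category of twisted modules is exactly the piece you admit you cannot yet make precise, and nothing in the paper's framework supplies that categorical structure or the identification of the character-theoretic $\qdim_V M$ with a categorical dimension there. The paper sidesteps all of this with a two-line reduction: replace $G$ by the cyclic group $\langle g\rangle$, for which $G_M=\langle g\rangle$ (since $g\in G_M$ always) and every irreducible projective representation is one-dimensional (the Schur multiplier of a cyclic group is trivial), so (\ref{eqq1}) gives $\qdim_{V^G}M_\l=\qdim_V M$; the claim then follows from the known fact \cite{DJX} that quantum dimensions of irreducible modules of the rational, $C_2$-cofinite vertex operator algebra $V^{\langle g\rangle}$ lie in $\{2\cos\frac{\pi}{n}\mid n\geq 3\}\cup[2,\infty)$. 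You should substitute this reduction for the subfactor argument; as written, that part of your proposal is a plan contingent on unproved categorical input rather than a proof.
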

\begin{proof}

 Using Lemma \ref{tga} we see that
 $$Z_{M_\l}(v, \tau)=\frac{1}{|G_M|}\sum_{h\in G_M}Z_M(v, (g,h),\tau)\overline{\lambda(h)}$$
  for $v\in V^G.$ By Theorem \ref{minvariance} we have
\begin{equation}\label{smatrix}
Z_{M_\l}(v, -1/\tau)=\frac{1}{|G_M|}\tau^{\wt[v]}\sum_{h\in G_M}\sum_{N\in \mathscr{M}(h,g^{-1})}S_{M,N}Z_N(v,(h,g^{-1}),\tau)\overline{\lambda(h)}.
\end{equation}
On the other hand,
$$Z_{M_\l}(v, -1/\tau)=\tau^{\wt[v]}\sum_{W\in \mathscr{M}_{V^G}}S_{M_\l,W}Z_W(v,\tau).$$
So we need to look for the coefficient of $Z_{V^G}(v,\tau)$ in the right hand side
of equation (\ref{smatrix}).

From the decomposition
$$N=\oplus_{\mu\in\Lambda_{G_N,\alpha_N}}W_{\mu}\otimes N_{\mu}$$
and Theorems \ref{mthm1}, \ref{MT} we know that $Z_N(v,(h,g^{-1}),\tau)=\sum_{\mu\in\Lambda_{G_N,\alpha_N}}\mu(g^{-1})Z_{N_{\mu}}(v,\tau)$ is a linear combination.
So if $N\ne V,$  $Z_N(v,(h,g^{-1}),\tau)$ does not contribute
$Z_{V^G}(v,\tau).$  If $N=V$ then $h=1$ and $G_V=G,$ and $\Lambda_{G_V,\alpha_V}$ is the set of irreducible characters. The coefficient of $Z_{V^G}(v,\tau)$ in  $Z_V(v,(1,g^{-1}),\tau)$ is 1. As a result we see the coefficient of $Z_{V^G}(v,\tau)$ in $\tau^{-\wt[v]}Z_{M_\l}(v, -1/\tau)$ is $\frac{\dim W_\l}{|G_M|}S_{M,V}.$

In the case that  $M=V$
 and $M_{\l}=V^G$ we see that $S_{V^G,V^G}= \frac{1}{|G|}S_{V,V}.$
It follows from Proposition \ref{tqdim} (also see the proof of Lemma 4.2 of \cite{DJX}) that
$$\qdim_{V^G}M_{\l}=\frac{S_{M_\l,V^G}}{S_{V^G,V^G}}=\frac{|G|}{|G_M|}\dim_{W_\l}\frac{S_{M,V}}{S_{V,V}}
=[G:G_M]\dim{W_\l}\qdim_VM.$$

To prove that the inequality  $\qdim_{V}M$ lies in $\{2\cos\frac{\pi}{n}|n\geq 3\}\cup [2,\infty)$  we take $G$ to be the cyclic group generated by $g.$
We have mentioned already that $G_M=G$ in this case and $M$ is a $G$-module. Since $G$ is abelian,
any irreducible module is one-dimensional.  Using  (\ref{eqq1}) and fact that $\qdim_{V^G}M_{\l}$ belongs to $\{2\cos\frac{\pi}{n}|n\geq 3\}\cup [2,\infty)$ \cite{DJX}  concludes that  $\qdim_{V}M$ lies in $\{2\cos\frac{\pi}{n}|n\geq 3\}\cup [2,\infty).$ The positivity of $S_{M,V}$
follows from Proposition \ref{tqdim},  the positivity of $S_{V,V}$ (see Remark \ref{positivity}) and $\qdim_{V}M.$
The proof is complete.
\end{proof}

We remark that equation (\ref{eqq1}) also appears in the conformal
net setting in Th. 4.5 of \cite{KLX}.

The following corollary is immediate by noting that $|G_M|=\sum_{\lambda\in\Lambda_{G_M,\alpha_M}}(\dim W_\l)^2.$
\begin{coro} We have
$$\qdim_{V^G}M=|G|\qdim_VM.$$
for any irreducible $g$-twisted $V$-module $M.$
\end{coro}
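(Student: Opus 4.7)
The plan is to use the decomposition of $M$ as a $V^G$-module given in equation (\ref{decom}), namely
\[
M = \bigoplus_{\lambda \in \Lambda_{G_M,\alpha_M}} W_\lambda \otimes M_\lambda,
\]
where each $M_\lambda$ is an irreducible $V^G$-module by Theorem \ref{MT}. Since quantum dimension over $V^G$ is additive on direct sums of $V^G$-modules (as it is essentially a limit of ratios of characters and the character of $M$ is literally the sum of the characters of the isotypic components), I would first write
\[
\qdim_{V^G} M = \sum_{\lambda \in \Lambda_{G_M,\alpha_M}} (\dim W_\lambda) \, \qdim_{V^G} M_\lambda.
\]

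Next I would substitute the formula (\ref{eqq1}) proved in Theorem \ref{MTH},
\[
\qdim_{V^G} M_\lambda = [G:G_M] (\dim W_\lambda) \, \qdim_V M,
\]
which gives
\[
\qdim_{V^G} M = [G:G_M] \, \qdim_V M \sum_{\lambda \in \Lambda_{G_M,\alpha_M}} (\dim W_\lambda)^2.
\]

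Finally, I would invoke the identity $\sum_\lambda (\dim W_\lambda)^2 = |G_M|$, which is the standard fact that the dimension of the (semisimple) twisted group algebra $\C^{\alpha_M}[G_M]$ equals the sum of the squares of the dimensions of its irreducible representations, and equals $|G_M|$. Combining $[G:G_M] \cdot |G_M| = |G|$ yields the claimed identity. There is no real obstacle here: the entire argument is a direct assembly of equation (\ref{eqq1}), the Wedderburn decomposition of $\C^{\alpha_M}[G_M]$, and the multiplicity decomposition (\ref{decom}). The only mildly delicate point worth spelling out is that $\qdim_{V^G}$ is additive on finite direct sums, which is immediate from the definition as a limit of character ratios.
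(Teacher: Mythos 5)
Your proof is correct and is essentially the paper's own argument: the authors state the corollary is ``immediate by noting that $|G_M|=\sum_{\lambda\in\Lambda_{G_M,\alpha_M}}(\dim W_\lambda)^2$,'' which is exactly the combination of the decomposition (\ref{decom}), equation (\ref{eqq1}), and the Wedderburn identity for $\C^{\alpha_M}[G_M]$ that you spell out. No issues.
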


Theorem \ref{MTH} has been obtained previously in \cite{DJX} with $M=V.$ That is, $\qdim_{V^G} V_{\l}=\dim W_{\l}.$
We next explain the appearance of $\frac{|G|}{|G_M|}$ in the quantum dimension $\qdim_{V^G}M_{\l}$ and its relation with a Schur-Weyl duality \cite{DY}, \cite{MT}.

Recall ${\cal S}$ is the set of equivalent classes of irreducible $g$-twisted modules for $g\in G.$ We need to recall the construction of the finite dimensional semisimple associative algebra $A_\a(G,\S)$ defined in \cite{DY}. As we have mentioned already that $G$ acts on $\S$ by sending $(M,h)$ to $M\circ h$ for $h\in G$ and $M\in \S.$ Let $M\in {\S}$ and $x\in G$. Then
there exists $N\in {\S}$ such that
$N\cong M\circ x.$ That is,
there is a linear map
$\p_N(x):N\rightarrow M$ satisfying the condition:
$\p_N(x)Y_N(v,z)\p_N(x)^{-1}=Y_M(xv,z).$
By simplicity of $N$, there exists $\a_N(y,x)\in \C^*$
such that $\p_M(y)\p_N(x)=\a_N(y,x)\p_N(yx).$
Moreover, for $x,y,z\in G$ we have
$$\a_N(z,yx)\a_N(y,x)=\a_M(z,y)\a_N(zy,x).$$

Set $\C{\S}=\bigoplus_{M\in {\S}}\C e(M)$
define $e(M)e(N)=\d_{M,N}e(M)$. Then $\C{\S}$ is an associative algebra.
Let
$$U(\C{\cal S})
=\{\sum_{M\in{\S}}c_Me(M)|c_{M}\in \C^*\}$$
be the units of  $\C{\S}.$
Then
$$\a(h,k)=\sum_{M\in {\S}}\a_{M}(h,k)e(M)$$
lies in $U(\C{\cal S}).$
It is easy to check that
$$\a(hk,l)\a(h,k)^l=\a(h,kl)\a(k,l)$$
where
$$\a(h,k)^l=\sum_{M\in {\S}}\a_{M}(h,k)e(M\circ l).$$
So
$\a\in Z^2(G,U(\C\cal{S}))$ is a 2-cocycle.

The associative algebra
$A_{\a}(G,{\S})$ is defined as $\C[G]\otimes \C\S$
with multiplication
$$ a\ot e(M)\cdot b\ot e(N)=\a_{N}(a,b)ab\ot e(M\cdot b)e(N)$$
for $a,b\in G$ and $M,N\in\S.$ Then $A_{\a}(G,{\S})$ is a semisimple associative algebra.
Moreover, $\oplus_{N\in \S}N$ is an $A_{\a}(G,{\S})$-module with
the action: for $M,N\in {\S}$ and $w\in N$ we set
$$a\ot e(M)\cdot w= \d_{M,N}\p_{M}(a)w$$
where $\p_{N}(a):N\rightarrow N\cdot a^{-1}$ \cite{DY}.
It is clear that the action of $A_{\a}(G,{\S})$
and $V^G$ commute.
For $M\in {\S}$ we let ${\Or}_M=\{N\cdot x|x\in G\}$
be the orbit of $N$ under $G.$ It is clear that $\oplus_{N\in \Or_M}N$ is an $A_{\a}(G,{\S})$-submodule
of $\oplus_{N\in \S}N.$

Let $M\in \S$ and set $D(M)=\<a\ot e(M)|a\in G\>$ and $S(M)=\<a\ot e(M)|a\in G_M\>.$ Then both $D(M)$ and $S(M)$ are subalgebras of $A_\a(G,\S)$ and $S(M)$ is isomorphic to $\C^{\a_M}[G_M].$ Moreover, each $\Ind_{S(M)}^{D(M)}W_{\l}$ is an irreducible $A_{\alpha}(G,\S)$-module and
$$\bigoplus_{N\in \Or_M}N=\sum_{\lambda\in\Lambda_{G_M,\alpha_M}}\Ind_{S(M)}^{D(M)}W_{\l}\otimes M_{\l}$$
as an  $A_{\a}(G,{\S})\otimes V^G$-module. It is easy to see that $\dim \Ind_{S(M)}^{D(M)}W_{\l}=[G:G_M]\dim W_{\l}.$ Let $M^j$ for $j\in J$ be the orbit representatives of $\S.$ Then $\Ind_{S(M^j)}^{D(M^j)}W_{\l}$
for $j\in J$ and $\lambda\in \Lambda_{G_{M^j},\alpha_{M^j}}$ give a complete list of inequivalent irreducible $A_{\a}(G,\S)$-modules. So the associative algebra $A_{\a}(G,\S)$ and $V^G$ form a dual pair on
$$\bigoplus_{N\in \S}N=\sum_{j\in J}\sum_{\lambda\in \Lambda_{G_{M^j},\a_{M^j}}}\Ind_{S(M^j)}^{D(M^j)}W_{\l}\otimes M_{\l}^j.$$

By Theorem \ref{MTH} we have
\begin{coro}
Let $V,G,M^j$ be as before, then $\qdim_{V^G}M^j_{\lambda}=\dim({\rm Ind}_{S(M^j)}^{D(M^j)}W_{\l}) \qdim_VM^j.$
\end{coro}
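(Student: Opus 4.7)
The plan is to derive the corollary as a direct consequence of Theorem \ref{MTH} combined with the induced-module dimension identity that was recorded in the paragraph just preceding the statement, namely $\dim \mathrm{Ind}_{S(M^j)}^{D(M^j)}W_{\l} = [G:G_{M^j}]\dim W_{\l}$.

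First I would invoke Theorem \ref{MTH} applied to the orbit representative $M^j$ and the character $\l \in \Lambda_{G_{M^j},\a_{M^j}}$, which gives the formula
\[
\qdim_{V^G} M^j_{\l} \;=\; [G:G_{M^j}]\,(\dim W_{\l})\,\qdim_V M^j.
\]
Next I would justify the dimension identity for the induced module. Since $S(M^j) \cong \C^{\a_{M^j}}[G_{M^j}]$ acts on the simple module $W_{\l}$ and $D(M^j)$ is a free right $S(M^j)$-module on a set of coset representatives of $G_{M^j}$ in $G$ (this is exactly the construction of $A_{\a}(G,\S)$ via tensoring $\C[G]$ against $\C\S$ and restricting to the idempotent $e(M^j)$, with twist governed by $\a$), the dimension of the induced module is $[G:G_{M^j}]\dim W_{\l}$ by the usual induction formula.

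Substituting this identity into the formula of Theorem \ref{MTH} then produces
\[
\qdim_{V^G} M^j_{\l} \;=\; \dim\bigl(\mathrm{Ind}_{S(M^j)}^{D(M^j)} W_{\l}\bigr)\,\qdim_V M^j,
\]
which is exactly the desired equality. The main obstacle, such as it is, is just verifying the induced-dimension formula in the twisted algebra setting, but since $S(M^j)$ sits inside $D(M^j)$ as a subalgebra whose complement (over a set of coset representatives of $G_{M^j}$ in $G$) is free as a one-sided module, the standard argument goes through unchanged. Nothing further is needed beyond these two inputs, so the proof is essentially a single-line substitution once the induction dimension is noted.
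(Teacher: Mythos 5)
Your proof is correct and follows the same route as the paper: the corollary is obtained by substituting the identity $\dim \mathrm{Ind}_{S(M^j)}^{D(M^j)}W_{\l}=[G:G_{M^j}]\dim W_{\l}$ (which the paper records in the paragraph immediately preceding the statement) into the quantum dimension formula of Theorem \ref{MTH}. Your additional justification of the induced-dimension identity via freeness of $D(M^j)$ over $S(M^j)$ on coset representatives is a reasonable elaboration of what the paper dismisses as ``easy to see,'' but the argument is otherwise identical.
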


From Theorem \ref{MTH1} and discussion above, we see that the $V^G$-module category is equivalent to the
$A_\a(G,\S)$-module category (here we only consider finite dimensional modules for $A_\a(G,\S)$). From
\cite{HL1}-\cite{HL3}, \cite{H}, the $V^G$-module category if a modular tensor category. So $A_{\a}(G,\S)$ should be a bialgebra and its module category should be a  modular tensor category. It is definitely interesting problem to find the coalgebra structure on $A_{\a}(G,\S)$ and define the tensor product in $A_{\a}(G,\S)$-module category. This expected tensor product will be helpful in determining the fusion rules for $V^G.$

Another problem is to how to define the intertwining operators among $g_i$-twisted $V$-modules $M^i$
where $g_i$ are any automorphism of $V$ of finite order.  If $g_1g_2=g_2g_1$ and $g_3=g_1g_2$
this was achieved in \cite{DLM1}. But it is not clear how to carry this out in general. It seems necessary to have intertwining operators among $g_i$-twisted $V$-modules $M^i$ for the purpose of studying the fusion rules for $V^G.$ As in the untwisted case, the fusion rules among twisted modules should have connection with the quantum dimensions of twisted modules.

\subsection{Global dimensions}
In this subsection we give a relation between the global dimensions of $V$ and $V^G.$

First recall from \cite{DJX} that the $\glob(V)=\sum_{M\in \mathscr{M}_V}(\qdim_VM)^2.$ It is proved in \cite{DJX} that $\glob(V)=\frac{1}{S_{V,V}^2}.$
\begin{lem}\label{lglo1} We have the following relation: $\glob(V^G)=|G|^2\glob(V).$
\end{lem}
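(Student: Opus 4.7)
The plan is to combine two ingredients already established in the paper: the closed-form expression $\glob(V) = 1/S_{V,V}^2$ from \cite{DJX}, and the identity $S_{V^G,V^G} = \frac{1}{|G|} S_{V,V}$ which dropped out of the computation in Theorem \ref{MTH} (taking $M = V$ and $\lambda$ to be the trivial character, so that $M_\lambda = V^G$ and $\dim W_\lambda = 1$, $G_M = G$).

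First I would apply the formula $\glob(V^G) = 1/S_{V^G,V^G}^2$, which is legitimate because hypotheses (V1)--(V3) guarantee that $V^G$ is a simple, rational, $C_2$-cofinite vertex operator algebra of CFT type, so the results of \cite{DJX} apply verbatim to $V^G$ in place of $V$. Second, I would substitute the value $S_{V^G,V^G} = \frac{1}{|G|} S_{V,V}$ obtained in Theorem \ref{MTH}, yielding
\begin{equation*}
\glob(V^G) \;=\; \frac{1}{S_{V^G,V^G}^2} \;=\; \frac{|G|^2}{S_{V,V}^2} \;=\; |G|^2 \glob(V).
\end{equation*}

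Since both inputs are already in hand, there is no real obstacle; the lemma is essentially a one-line consequence of Theorem \ref{MTH}. The only thing worth double-checking is that Theorem \ref{MTH} genuinely specializes to $S_{V^G,V^G} = \frac{1}{|G|} S_{V,V}$ in the case $M = V$, $\lambda = \mathbf{1}$, which I would verify explicitly (noting $G_V = G$, $\alpha_V$ trivial, $W_{\mathbf{1}} = \mathbb{C}$, and $V_{\mathbf{1}} = V^G$) before invoking the formula.
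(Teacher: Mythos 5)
Your proposal is correct and is essentially identical to the paper's own proof: the paper likewise combines $\glob(V^G)=1/S_{V^G,V^G}^2$ from \cite{DJX} with the specialization $S_{V^G,V^G}=\frac{1}{|G|}S_{V,V}$ extracted from Theorem \ref{MTH}. Your extra care in verifying the specialization at $M=V$, $\lambda$ trivial is a sensible check but adds nothing beyond what the paper already records.
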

\begin{proof}
By Theorem \ref{MTH} we know that $S_{V^G,V^G}=\frac{1}{|G|}S_{V,V}.$ Thus
$$\glob(V^G)=\frac{1}{S_{V^G,V^G}^2}=|G|^2\frac{1}{S_{V,V}^2}=|G|^2\glob(V),$$
as desired.
\end{proof}

This result has been given in \cite{ADJR} with a proof involving the category theory. The proof here uses the explicit expression of $S_{V^G,V^G}.$ Toshiyuki Abe has independently obtained this result recently.

Recall that the $M^j$ are the orbit representatives. Here is another formula about the global dimension of $V^G$ in terms of quantum dimensions of $\qdim_VM^j.$
 \begin{prop}\label{pglo2} We have
\begin{equation}\label{e1}\glob(V^G)=|G|^2\sum_{j\in J}\frac{1}{|G_{M^j}|}(\qdim_VM^j)^2,
\end{equation}
or another formula for the global dimension of $V$
\begin{equation}\label{e2}
\glob(V)=\sum_{j\in J}\frac{1}{|G_{M^j}|}(\qdim_VM^j)^2.
\end{equation}
\end{prop}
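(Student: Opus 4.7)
The plan is a direct computation starting from the definition of the global dimension and using the classification of irreducible $V^G$-modules together with the quantum dimension formula from Theorem \ref{MTH}. By Theorem \ref{MTH1} and Theorem \ref{MT}, every irreducible $V^G$-module is isomorphic to some $M_{\lambda}$ with $M$ an irreducible $g$-twisted $V$-module and $\lambda \in \Lambda_{G_M,\alpha_M}$, and two such modules $M_{\lambda}$ and $N_{\mu}$ are equivalent only if $M$ and $N$ lie in the same $G$-orbit in $\mathcal{S}$. So if $\{M^j\mid j\in J\}$ is a complete set of orbit representatives, then the pairs $(j,\lambda)$ with $j\in J$ and $\lambda\in\Lambda_{G_{M^j},\alpha_{M^j}}$ parametrize the inequivalent irreducible $V^G$-modules without repetition.

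Combining this with the quantum dimension identity $\qdim_{V^G}M^j_{\lambda}=[G:G_{M^j}]\dim W_{\lambda}\qdim_V M^j$ from Theorem \ref{MTH}, I would write
\begin{equation*}
\glob(V^G)=\sum_{j\in J}\sum_{\lambda\in\Lambda_{G_{M^j},\alpha_{M^j}}}[G:G_{M^j}]^2(\dim W_{\lambda})^2(\qdim_V M^j)^2.
\end{equation*}
The inner sum over $\lambda$ is then handled by the standard identity $\sum_{\lambda\in\Lambda_{G_M,\alpha_M}}(\dim W_{\lambda})^2=|G_M|$ for the semisimple twisted group algebra $\C^{\alpha_M}[G_M]$, which was already invoked just before Proposition \ref{pglo2}. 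Substituting and simplifying $[G:G_{M^j}]^2\cdot|G_{M^j}|=|G|^2/|G_{M^j}|$ yields formula (\ref{e1}).

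For (\ref{e2}), I would simply divide both sides of (\ref{e1}) by $|G|^2$ and apply Lemma \ref{lglo1}, which asserts $\glob(V^G)=|G|^2\glob(V)$. No further modular or categorical input is required once one has Theorem \ref{MTH} and the orbit-based classification.

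There is essentially no hard step here; the only thing to be careful about is the bookkeeping ensuring that the sum over all irreducible $V^G$-modules factors cleanly as a sum over orbits followed by a sum over $\lambda$ within each stabilizer. The potential pitfall is double-counting, but Theorem \ref{MT} rules this out, so the derivation reduces to the dimension identity for the twisted group algebra and a routine substitution.
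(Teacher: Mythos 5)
Your proposal is correct and follows the same route as the paper: classify the irreducible $V^G$-modules via Theorems \ref{MT} and \ref{MTH1}, apply the quantum dimension formula of Theorem \ref{MTH}, collapse the inner sum with $\sum_{\lambda}(\dim W_{\lambda})^2=|G_{M^j}|$, and deduce (\ref{e2}) from (\ref{e1}) via Lemma \ref{lglo1}. No discrepancies.
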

\begin{proof} It is clear from Lemma \ref{lglo1} that these two identities are equivalent. So we only need to prove the first identity. By Theorems \ref{MT} and \ref{MTH1} we know that the inequivalent irreducible $V^G$-modules are $M^j_{\l}$ for $j\in J$ and $\lambda\in \Lambda_{G_{M^j},\a_{M^j}}.$ Using Theorem \ref{MTH} we know that
$$\sum_{\lambda\in \Lambda_{G_{M^j},\a_{M^j}}}(\qdim_{V^G}M^j_{\l})^2=\frac{|G|^2}{|G_{M^j}|}(\qdim_VM^j)^2.$$
Sum over $j$ gives the desired result.
\end{proof}

The problem with (\ref{e2}) is we need to use $G_M$ which is not known in general.
We can reformulate (\ref{e2}) only using $|G|.$
Recall $\S$ is the set of all inequivalent irreducible $g$-twisted $V$-modules for $g\in G.$
\begin{coro}\label{c1} The following identity holds:
$$|G|\glob(V)=\sum_{M\in \S}(\qdim_VM)^2.$$
\end{coro}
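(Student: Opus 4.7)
The plan is to start from identity (\ref{e2}) in Proposition \ref{pglo2} and reorganize the sum over orbit representatives $M^j$ into a sum over all of $\S$ using the orbit-stabilizer theorem.

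First I would recall that $G$ acts on $\S$ by $M\mapsto M\circ h$, that $G_{M^j}$ is by definition the stabilizer of $M^j$ under this action, and that by the orbit-stabilizer theorem the orbit $\Or_{M^j}=\{M^j\circ h\mid h\in G\}$ has cardinality $[G:G_{M^j}]=|G|/|G_{M^j}|$. Since $\{M^j\}_{j\in J}$ is a set of orbit representatives, $\S$ is a disjoint union $\S=\bigsqcup_{j\in J}\Or_{M^j}$.

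Next I would use that the quantum dimension is constant on $G$-orbits: indeed, as noted in Section 4.1 right after the definition, $\qdim_V(M\circ h)=\qdim_V M$ for every $h\in G$ and every $g$-twisted $V$-module $M$ (this follows because $M$ and $M\circ h$ have the same graded character, the $V^G$-action being unchanged). Thus for every orbit,
\begin{equation*}
\sum_{M\in \Or_{M^j}}(\qdim_V M)^2=[G:G_{M^j}](\qdim_V M^j)^2=\frac{|G|}{|G_{M^j}|}(\qdim_V M^j)^2.
\end{equation*}

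Summing over $j\in J$ and invoking (\ref{e2}) gives
\begin{equation*}
\sum_{M\in \S}(\qdim_V M)^2=\sum_{j\in J}\frac{|G|}{|G_{M^j}|}(\qdim_V M^j)^2=|G|\sum_{j\in J}\frac{1}{|G_{M^j}|}(\qdim_V M^j)^2=|G|\glob(V),
\end{equation*}
which is the claimed identity. There is no real obstacle here, since all of the heavy lifting is already contained in Proposition \ref{pglo2} and in the $G$-invariance of quantum dimensions; the argument amounts to a reindexing via orbit-stabilizer together with the observation that isomorphic $V$-modules contribute equally.
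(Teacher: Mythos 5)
Your proposal is correct and follows essentially the same route as the paper: both start from identity (\ref{e2}), multiply by $|G|$, and then convert the sum over orbit representatives into a sum over all of $\S$ by noting that each orbit $\Or_{M^j}$ has $[G:G_{M^j}]$ elements, all sharing the same quantum dimension since $\qdim_V(M\circ h)=\qdim_V M$.
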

\begin{proof} From (\ref{e2}) we see that
$$|G|\glob(V)=\sum_{j\in J}[G:G_{M^j}](\qdim_VM^j)^2.$$
Note that the $G$-orbit ${\cal O}_{M^j}$ has exactly $[G:G_{M^j}]$ irreducible twisted modules and all $N\in{\cal O}_{M^j}$
have the same character $\chi_N(\tau)=\chi_{M^j}(\tau),$ the same quantum dimension. Thus
$$[G:G_{M^j}]({\qdim_VM^j})^2=\sum_{N\in{\cal O}_{M^j}}(\qdim_VN)^2.$$
Summation over the $G$-orbits gives the result.
\end{proof}

It is worthy to mention that the formula (\ref{e2})  is true for any finite automorphism group $G$ as long as conditions (V1)-(V4) hold. It seems that this formula is new.

Corollary \ref{c1} makes us believe that  the following
 refinement of Corollary \ref{c1} is true:  Let $V$ be a rational vertex operator algebra, then  for any automorphism $g$ of finite order.
\begin{equation}\label{e3}
\glob(V)=\sum_{M\in \mathscr{M}(g)}(\qdim_VM)^2.
\end{equation}
So as far as the global dimension concerns, all the elements in the automorphism group of $V$ of finite orders have the equal weight.

\begin{prop} Let $g$ be an automorphism of $V$ of order less than or equal to $2$ and $V$ satisfy  conditions (V1)-(V4) with $G$ being the cyclic group generated by $g.$ Then ( \ref{e3}) is valid.
\end{prop}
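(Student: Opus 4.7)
The plan is to derive the identity directly from Corollary \ref{c1}, which already does almost all the work. Let $G=\langle g\rangle$, so $|G|\le 2$ by hypothesis.

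First, handle the trivial case $g=1$. Then $\mathscr{M}(g)=\mathscr{M}_V$ and (\ref{e3}) reduces to the very definition $\glob(V)=\sum_{M\in\mathscr{M}_V}(\qdim_V M)^2$ recalled from \cite{DJX} at the start of Section 4.4, so there is nothing to prove.

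Next, suppose $g$ has order exactly $2$, so $|G|=2$ and $\mathcal{S}=\mathscr{M}(1)\sqcup\mathscr{M}(g)$ is a disjoint union. Conditions (V1)--(V4) are assumed, so Corollary \ref{c1} applies and gives
\begin{equation*}
2\,\glob(V)\;=\;|G|\,\glob(V)\;=\;\sum_{M\in\mathcal{S}}(\qdim_V M)^2\;=\;\sum_{M\in\mathscr{M}(1)}(\qdim_V M)^2+\sum_{M\in\mathscr{M}(g)}(\qdim_V M)^2.
\end{equation*}
The first summand on the right is $\sum_{M\in\mathscr{M}_V}(\qdim_V M)^2=\glob(V)$ by definition. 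Subtracting $\glob(V)$ from both sides yields $\glob(V)=\sum_{M\in\mathscr{M}(g)}(\qdim_V M)^2$, which is exactly (\ref{e3}).

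There is essentially no obstacle here: all the analytic and representation-theoretic content has already been absorbed into Corollary \ref{c1}. The reason the argument works only for $|G|\le 2$ is purely combinatorial: once $|G|\ge 3$, splitting $\mathcal{S}$ according to $\mathscr{M}(g^i)$ produces more than two pieces, and the single identity from Corollary \ref{c1} no longer forces each individual $\sum_{M\in\mathscr{M}(g^i)}(\qdim_V M)^2$ to equal $\glob(V)$; one would need a finer identity (presumably coming from the refined Verlinde/$S$-matrix structure on the $V^G$-side) to separate the contributions from different powers of $g$.
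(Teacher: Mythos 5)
Your proof is correct and is essentially the paper's own argument for the stated case: for $o(g)=2$ the paper likewise just invokes Corollary \ref{c1} with $G=\langle g\rangle$ and subtracts the untwisted contribution $\sum_{M\in\mathscr{M}(1)}(\qdim_V M)^2=\glob(V)$. One caveat on your closing remark: the paper's proof in fact pushes the same idea to orders $3$ and $4$ by using the bijection $M\mapsto M'$ between $\mathscr{M}(g^i)$ and $\mathscr{M}(g^{-i})$ (contragredient modules have equal characters, hence equal quantum dimensions), which pairs off the pieces of $\mathcal{S}$ and again reduces everything to Corollary \ref{c1}; so the obstruction at $|G|\ge 3$ is less absolute than you suggest, although the general case does require the finer $S$-matrix analysis of Section 5.
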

\begin{proof}
 If $o(g)=2,$ this is clear from Corollary \ref{c1} with $G$ being the group generated by $g$. If $o(g)=3,$ there is a bijection from $\mathscr{M}(g)$ to $\mathscr{M}(g^{-1})$ by sending $M$ to its contragredient module $M'.$ It is well known that $\chi_M(\tau)=\chi_{M'}(\tau).$ So $M$ and $M'$ have the same quantum dimension. Using Corollary \ref{c1} with $G$ being the cyclic group generated by $g.$

If $o(g)=4,$ then (\ref{e3})is true for $g^2.$  Using Corollary \ref{c1} gives
 $$2\glob(V)=\sum_{M\in \mathscr{M}(g)\cup \mathscr{M}(g^3)}(\qdim_VM)^2.$$
As before we have
$$\sum_{M\in \mathscr{M}(g)}(\qdim_VM)^2=\sum_{M\in \mathscr{M}(g^3)}(\qdim_VM)^2$$
and the result follows.
\end{proof}

 We will  prove (\ref{e3}) for general $g$ in Section 5 as it involves more.

Our next example comes from the cyclic permutation orbifolds from \cite{BDM}. Let $V$ be a simple, rational, $C_2$-cofinite  vertex operator algebra of CFT type such that the weight of any irreducible module is positive except for $V$ itself. Let $k$ be a fixed positive integer. Then $V^{\otimes k}$ is a vertex operator algebra \cite{FHL} satisfying the same conditions and the symmtric group $S_k$ acts on $V^{\otimes k}$
naturally. Let  $g=(1,2,...,k).$  By \cite{ADJR},
$$\glob(V^{\otimes k})=\glob(V)^k=\frac{1}{S_{V,V}^{2k}}.$$

Let $V=M^0,M^1,...,M^p$ be the inequivalent irreducible $V$-modules. There is a
functor ${\cal T}_g^k$ from the category of $V$-modules to the category of $g$-twisted $V^{\otimes k}$-modules
such that ${\cal T}_g^k(M)=M$ as vector space. Moreover,
$$\chi_{{\cal T}_g^k(M)}(\tau)=\chi_M(\frac{\tau}{k}).$$
Note that $\chi_{V^{\otimes k}}(\tau)=\chi_V(\tau)^k.$ We now have the following standard computation
\begin{equation*}
\begin{split}
\qdim_{V^{\otimes k}}{\cal T}_g^k(M^s)=&\lim_{y\to 0}\frac{\chi_{{\cal T}_g^k(M^s)}(iy)}{\chi_{V^{\otimes k}}(\tau)}\\
=&\lim_{y\to \infty}\frac{\chi_{M^s}(-\frac{1}{iky})}{\chi_{V}(-\frac{1}{iy})^k}\\
=&\lim_{y\to \infty}\frac{\sum_{t=0}^pS_{M^s,M^t}\chi_{M^t}(iky)}{(\sum_{t=0}^pS_{V,M^t}\chi_{M^t}(iy))^k}\\
=&\frac{S_{M^s,V}}{S_{V,V}^k}
\end{split}
\end{equation*}
where we have used the fact that
$$\lim_{y\to \infty}\frac{\chi_{M^t}(iky)}{\chi_V(iy)^k}=\delta_{t,0}$$
and
$$\lim_{y\to \infty}\frac{\chi_{M^{t_1}}(iy)\cdots \chi_{M^{t_k}}(iy)}{\chi_V(iy)^k}=\delta_{(t^1,...,t^k),(0,...,0)}$$
as the weight of $M^t$ is positive except $M^0=V.$ This gives
$$\sum_{s=0}^p(\qdim_{V^{\otimes k}}{\cal T}_g^k(M^s))^2=\sum_{s=0}^p\frac{S_{M^s,V}^2}{S_{V,V}^{2k}}
=\frac{1}{S_{V,V}^{2k}}=\glob(V^{\otimes k})$$
as $\sum_{s=0}^pS_{M^s,V}^2=1$ (see \cite{DJX} and \cite{DLN}). So (\ref{e3}) is valid for $V^{\otimes k}$ and $(1,2,...,k).$
One can also verify that (\ref{e3}) is true for $V^{\otimes k}$ and any $g\in S_k$ using \cite{BDM}.

We now go back to the lattice vertex operator algebra $V_L$ with $L=\Z\alpha.$  Its irreducible modules  are $V_{L+\frac{r}{2k}\alpha}$ for $r=0,...,2k-1,$ and its irreducible $\theta$-twisted modules are $V_L^{T_i}$ for $i=0,1.$ We now compute $\qdim_{V_L}V_L^{T_i}$  directly. Notice from \cite{DJX} that $\glob(V_L)=2k$ as each irreducible
module has quantum dimension 1. Since the order of $\theta$ is two, and $\chi_{V_L^{T_0}}(\tau)=\chi_{V_L^{T_1}}(\tau)$ we see that $(\qdim_{V_L}V_L^{T_i})^2=k$ for $i=0,1$
and $\qdim_{V_L}V_L^{T_i}=\sqrt{k}.$ Of course, this is the same as before.

\section{Global dimensions and twisted modules}

Our main goal in this section is  to prove (\ref{e3}) for general $g.$ The key idea is to prove that the sum
of the squares of the quantum dimensions of irreducible $V^G$-modules appearing in irreducible $g^r$-twisted modules is $\frac{1}{T}\glob(V^G)$ for any $r=0,...,T-1$ where $G$ is the subgroup of $\Aut(V)$ generated by $g$ and $T$ is the order of $g.$

Recall that $V=\oplus_{r=0}^{T-1}V^r$ is the decomposition of $V$ into the direct sum of eigenspaces $V^r$
of $g$ with eigenvalue $e^{-2\pi ir/T}.$

\begin{lem}\label{extral} Let  $r\in\{0,...,T-1\}$ and $M$ an irreducible $g^r$-twisted $V$-module and $M_{\l}$ be an irreducible $V^G$-submodule of $M.$ Then $\frac{S_{V^1,M_{\l}}}{S_{V^G,M_{\l}}}=e^{2\pi i r/T}.$
\end{lem}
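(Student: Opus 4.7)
The plan is to adapt the argument of Theorem \ref{MTH} to compute $S_{M_{\lambda},V^{1}}$, and then divide by the already-known value of $S_{M_{\lambda},V^{G}}$. Note that $V$ itself is an irreducible (untwisted) $V$-module with stabilizer $G_{V}=G$ and trivial cocycle $\alpha_{V}=1$, so the $V^{G}$-decomposition of $V$ reads $V=\bigoplus_{\mu\in\widehat{G}}W_{\mu}\otimes V_{\mu}$ where, for cyclic $G=\langle g\rangle$, $V_{\mu}=\{v\in V\mid gv=\mu(g)v\}$. Thus $V^{1}$ corresponds to the character $\mu_{1}$ with $\mu_{1}(g)=e^{-2\pi i/T}$.

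First I would write, exactly as in the proof of Theorem \ref{MTH} but with $g$ replaced by $g^{r}$,
\begin{equation*}
Z_{M_{\lambda}}(v,\tau)=\frac{1}{|G_{M}|}\sum_{h\in G_{M}}\overline{\lambda(h)}\,Z_{M}(v,(g^{r},h),\tau),
\end{equation*}
and then apply the $S$-transformation from Theorem \ref{minvariance} to each $Z_{M}(v,(g^{r},h),\tau)$:
\begin{equation*}
Z_{M_{\lambda}}(v,-1/\tau)=\frac{\tau^{\mathrm{wt}[v]}}{|G_{M}|}\sum_{h\in G_{M}}\sum_{N\in\mathscr{M}(h,g^{-r})} S_{M,N}\,Z_{N}(v,(h,g^{-r}),\tau)\,\overline{\lambda(h)}.
\end{equation*}
I now extract the coefficient of $Z_{V^{1}}(v,\tau)$. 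By Theorems \ref{mthm1} and \ref{MT}, the only $N$ whose decomposition $Z_{N}(v,(h,g^{-r}),\tau)=\sum_{\mu}\mu(g^{-r})Z_{N_{\mu}}(v,\tau)$ produces a $V^{G}$-summand of $V$ is $N=V$, which forces $h=1$.

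The contribution of the pair $(N,h)=(V,1)$ is
\begin{equation*}
\frac{1}{|G_{M}|}S_{M,V}\sum_{\mu\in\widehat{G}}\mu(g^{-r})Z_{V_{\mu}}(v,\tau),
\end{equation*}
and the coefficient of $Z_{V^{1}}(v,\tau)=Z_{V_{\mu_{1}}}(v,\tau)$ is $\mu_{1}(g^{-r})=e^{2\pi ir/T}$. Combined with the $\dim W_{\lambda}$ factor coming exactly as in Theorem \ref{MTH}, this yields
\begin{equation*}
S_{M_{\lambda},V^{1}}=\frac{\dim W_{\lambda}}{|G_{M}|}\,S_{M,V}\,e^{2\pi ir/T}.
\end{equation*}
Dividing by $S_{M_{\lambda},V^{G}}=\tfrac{\dim W_{\lambda}}{|G_{M}|}S_{M,V}$ from Theorem \ref{MTH} and using the symmetry $S_{X,Y}=S_{Y,X}$ from Theorem \ref{Verlinde} gives the claim.

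The only delicate point is bookkeeping: one must match the sign convention in the definition $V^{r}=\{v:gv=e^{-2\pi ir/T}v\}$ with the character $\mu_{1}$ labeling $V^{1}$ inside the $V^{G}$-decomposition of $V$, so that the root of unity extracted from $\mu_{1}(g^{-r})$ comes out as $e^{+2\pi ir/T}$ rather than its conjugate. Everything else is a direct transcription of the argument used for $S_{M_{\lambda},V^{G}}$, restricted to isolating a different isotypic component of $V$ on the right-hand side.
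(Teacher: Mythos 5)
Your proof is correct and rests on exactly the same ingredients as the paper's: the orthogonality relation for (twisted) group characters, the $S$-transformation of Theorem \ref{minvariance}, and Theorems \ref{mthm1} and \ref{MT} to see that only $N=V$ (hence $h=1$) contributes to the isotypic components of $V$. The only difference is a transposition: the paper $S$-transforms $Z_{V^1}$ and $Z_{V^G}$ and reads off the coefficient of $Z_{M_{\lambda}}$, so it never needs the symmetry of the $S$-matrix, whereas you transform $Z_{M_{\lambda}}$, extract the coefficient of $Z_{V^1}$, and then invoke $S_{X,Y}=S_{Y,X}$ from Theorem \ref{Verlinde}; your sign bookkeeping ($\mu_1(g)=e^{-2\pi i/T}$, hence $\mu_1(g^{-r})=e^{2\pi i r/T}$) agrees with the paper's convention $V^r=\{v\mid gv=e^{-2\pi i r/T}v\}$.
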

\begin{proof} The proof is similar to that of Theorem \ref{MTH}. Note that  $G_V=G.$ We have
$$Z_{V^1}(v, \tau)=\frac{1}{|G|}\sum_{r=0}^{T-1}Z_V(v, (1,g^r),\tau)e^{2\pi i r/T},$$
$$Z_{V^G}(v, \tau)=\frac{1}{|G|}\sum_{r=0}^{T-1}Z_V(v, (1,g^r),\tau).$$
Thus,
$$
Z_{V^1}(v, -1/\tau)=\frac{\tau^{\wt[v]}}{|G|}\sum_{r=0}^{T-1}\sum_{N\in \mathscr{M}(g^r)}S_{V,N}Z_N(v,\tau)e^{2\pi i r/T},$$
$$Z_{V^G}(v, -1/\tau)=\frac{\tau^{\wt[v]}}{|G|}\sum_{r=0}^{T-1}\sum_{N\in \mathscr{M}(g^r)}S_{V,N}Z_N(v,\tau).$$
Note that for any $N\in \mathscr{M}(g^r),$  $N\circ g\in \mathscr{M}(g^r).$  Using Theorems
\ref{mthm1} and  \ref{MT} immediately gives the desired result.
\end{proof}

Note from Theorem \ref{Verlinde} that $e^{2\pi i r/T}$ are the eigenvalues of $F(V^1)$ for the irreducible $V^G$-module $V^1.$

\begin{thm}\label{class}  Let $r\in\{0,...,T-1\}.$ Then
$$\sum_{X}(\qdim_{V^G}X)^2=\frac{1}{T} \glob(V^G)$$
where the sum is over the inequivalent irreducible $V^G$-modules appearing in the irreducible $g^r$-twisted $V$-modules.
\end{thm}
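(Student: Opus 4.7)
The plan is to extend Lemma \ref{extral} from $V^1$ to every $V^s$ and then project onto eigenspaces using orthogonality of characters of $\Z/T$. Set $\zeta=e^{2\pi i/T}$. By Theorems \ref{MT} and \ref{MTH1}, each irreducible $V^G$-module $X$ occurs in some $g^{r(X)}$-twisted $V$-module for a unique $r(X)\in\{0,\ldots,T-1\}$. Rerunning the modular-invariance argument of Lemma \ref{extral} starting from
\[
Z_{V^s}(v,\tau)=\frac{1}{|G|}\sum_{k=0}^{T-1}\zeta^{sk}\,Z_V(v,(1,g^k),\tau),
\]
applying the $S$-transform, and matching coefficients of $Z_{M_\lambda}(v,\tau)$ exactly as in the proofs of Theorem \ref{MTH} and Lemma \ref{extral}, yields
\[
\frac{S_{V^s,X}}{S_{V^G,X}}=\zeta^{s\,r(X)}\qquad\text{for all }s\in\{0,\ldots,T-1\}.
\]

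Orthogonality of characters then expresses the indicator of the $r$-th eigenspace as $\delta_{r(X),r}=\frac{1}{T}\sum_{s=0}^{T-1}\zeta^{-rs}\,S_{V^s,X}/S_{V^G,X}$. Multiplying by $(\qdim_{V^G}X)^2=S_{X,V^G}^2/S_{V^G,V^G}^2$, summing over all irreducible $V^G$-modules $X$, and using the symmetry $S_{V^G,X}=S_{X,V^G}$ to cancel a factor gives
\[
\sum_{X:\,r(X)=r}(\qdim_{V^G}X)^2=\frac{1}{T\,S_{V^G,V^G}^2}\sum_{s=0}^{T-1}\zeta^{-rs}\sum_X S_{V^s,X}\,S_{X,V^G}.
\]
By Theorem \ref{Verlinde}(2), $\sum_X S_{V^s,X}S_{X,V^G}=(S^2)_{V^s,V^G}=\delta_{V^s,(V^G)'}$, which equals $\delta_{s,0}$ since $V^G$ is self-dual and $V^0,\ldots,V^{T-1}$ are pairwise inequivalent irreducible $V^G$-modules by Theorem \ref{mthm1}. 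The $s$-sum therefore collapses to the single term $s=0$, leaving $\frac{1}{T\,S_{V^G,V^G}^2}=\frac{1}{T}\glob(V^G)$, which is the claim.

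The only nontrivial step is the eigenvalue identity $S_{V^s,X}/S_{V^G,X}=\zeta^{sr(X)}$. Its verification is essentially bookkeeping: after averaging and applying $S$, the coefficient of $Z_{M_\lambda}(v,\tau)$ for $M\in\mathscr{M}(g^{r_0})$ acquires the phase $\zeta^{sr_0}$ from the twist sector, while the orbit-size factor $[G:G_M]/T=1/|G_M|$ and the multiplicity $\dim W_\lambda$ cancel against the corresponding factors in $S_{V^G,M_\lambda}$ from Theorem \ref{MTH}. Once this is granted, the remainder is a transparent consequence of the Verlinde formula and the unitarity of $S$, with no further input on the fusion structure of $V^G$ required.
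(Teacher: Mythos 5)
Your proposal is correct and takes essentially the same route as the paper: both arguments rest on the eigenvalue identity $S_{V^s,X}/S_{V^G,X}=e^{2\pi i s\,r(X)/T}$ together with the orthogonality $\sum_X S_{V^s,X}S_{X,V^G}=\delta_{s,0}$, and your explicit Fourier inversion over $\Z/T\Z$ is just the paper's ``solve the linear system with coefficient matrix $(e^{2\pi irs/T})$'' step written out. The only cosmetic difference is that you rederive the eigenvalue identity for every $s$ directly from modular invariance, whereas the paper obtains it from the $s=1$ case of Lemma \ref{extral} via the simple-current Lemma \ref{simplefusion}.
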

\begin{proof} Note from \cite{DJX}　 that for any irreducible $V^G$-module $Z$ the quantum dimension $d_Z=\qdim_{V^G}Z=\frac{S_{Z,V^G}}{S_{V^G,V^G}}.$ Also recall from \cite{DJX} that $\glob(V^G)=\frac{1}{S_{V^G,V^G}^2}.$  Thus
$$\sum_{X}d_X^2=\frac{1}{S_{V^G,V^G}^2}\sum_XS_{X,V^G}^2=\frac{1}{S_{V^G,V^G}^2}\sum_XS_{V^G,X}^2$$
where we have used the fact that $S$-matrix is symmetric (see Theorem \ref{Verlinde}). It is equivalent to show that
$$\sum_XS_{V^G,X}^2=\frac{1}{T}.$$

Set $x_r=\sum_XS_{V^G,X}^2.$ It follows from Lemmas \ref{simplefusion} and \ref{extral} we know that
$$\frac{S_{V^s,X}}{S_{V^G,X}}=(\frac{S_{V^1,X}}{S_{V^G,X}})^s=e^{2\pi irs/T}$$
for any $X$ as before and $s\in\{0,...,T-1\}.$
Using  Theorem \ref{Verlinde} and Proposition \ref{pDLN} we have orthogonal relation
$$\sum_{Z \in \mathscr{M}_{V^G}}S_{V^s,Z}S_{V^G,Z}=\delta_{s,0}$$
for any $s.$ From Theorem \ref{MTH1}, $ \mathscr{M}_{V^G}$ is a disjoint  union of $\mathscr{M}_{V^G}^r$
for $r=0,..,T-1$ where  $\mathscr{M}_{V^G}^r$ is the irreducible $V^G$-module appearing in an irreducible $g^r$-twisted $V$-module.
This gives a linear system
$$\sum_{r=0}^{T-1}x_re^{2\pi irs/T}=\sum_{r=0}^{T-1}\sum_{X\in \mathscr{M}_{V^G}^r}S_{V^G,X}^2
\frac{S_{V^s,X}}{S_{V^G,X}}=\delta_{s,0}$$
 for $s=0,...,T-1$ with non-degenerate coefficient matrix $A=(e^{2\pi irs/T})_{r,s=0}^{T-1}.$  So the linear system has a unique solution $(x_0,...,x_{T-1}).$ It is easy to see that
$x_0=x_1=\cdots =x_{T-1}=\frac{1}{T}$ is a solution. The proof is complete.
\end{proof}

Finally, we have the following result:
\begin{thm} \label{main} Let $g$ be an automorphism of $V$ of order  $n<\infty$ and $V$ satisfy  conditions (V1)-(V4) with $G$ being the cyclic group generated by $g.$ Then
 $$ \glob(V)=\sum_{M\in \mathscr{M}(g)}(\qdim_VM)^2.$$
\end{thm}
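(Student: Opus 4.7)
The plan is to deduce Theorem \ref{main} directly from Theorem \ref{class} (the per-twisted-sector identity $\sum_{X}(\qdim_{V^G}X)^2 = \frac{1}{T}\glob(V^G)$), together with the quantum-dimension formula of Theorem \ref{MTH} and the global-dimension comparison of Lemma \ref{lglo1}. Set $T = n$ and $G = \langle g \rangle$. For each $r \in \{0,\dots,T-1\}$, decompose $\mathscr{M}(g^r)$ into $G$-orbits and pick orbit representatives $M^j$. Since $\qdim_V(N\circ h) = \qdim_V N$, the quantum dimension is constant on each orbit, so
\begin{equation*}
\sum_{M \in \mathscr{M}(g^r)} (\qdim_V M)^2 \;=\; \sum_{j} [G:G_{M^j}]\,(\qdim_V M^j)^2.
\end{equation*}

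Next, by Theorems \ref{MT} and \ref{MTH1} the set $\mathscr{M}_{V^G}^r$ of inequivalent irreducible $V^G$-modules appearing inside irreducible $g^r$-twisted $V$-modules is precisely $\{M^j_\lambda : j,\ \lambda \in \Lambda_{G_{M^j},\alpha_{M^j}}\}$. By Theorem \ref{MTH}, $\qdim_{V^G} M^j_\lambda = [G:G_{M^j}]\,(\dim W_\lambda)\,\qdim_V M^j$, and $\sum_\lambda (\dim W_\lambda)^2 = |G_{M^j}|$ since $\C^{\alpha_{M^j}}[G_{M^j}]$ is a semisimple algebra of dimension $|G_{M^j}|$. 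Therefore
\begin{equation*}
\sum_{X \in \mathscr{M}_{V^G}^r} (\qdim_{V^G} X)^2
= \sum_j [G:G_{M^j}]^2 |G_{M^j}|\,(\qdim_V M^j)^2
= T \sum_j [G:G_{M^j}]\,(\qdim_V M^j)^2.
\end{equation*}

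Combining with the orbit-sum identity above and Theorem \ref{class} gives
\begin{equation*}
T \sum_{M \in \mathscr{M}(g^r)} (\qdim_V M)^2 = \frac{1}{T}\glob(V^G) = \frac{T^2}{T}\glob(V) = T\,\glob(V),
\end{equation*}
where the middle equality is Lemma \ref{lglo1}. Dividing by $T$ and specializing $r = 1$ yields the desired formula; in fact the argument shows the stronger statement that $\glob(V) = \sum_{M \in \mathscr{M}(g^r)}(\qdim_V M)^2$ for every $r$. There is no real obstacle here: the work has already been done in Theorem \ref{class}, and what remains is the bookkeeping identity $\sum_\lambda (\dim W_\lambda)^2 = |G_{M^j}|$ together with $G$-orbit counting, both routine. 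The only point that requires a moment's care is to remember that $G_M$ need not equal $G$ for $M \in \mathscr{M}(g^r)$ when $r \neq 1$ (only $g^r \in G_M$ is automatic), so the $[G:G_{M^j}]$ factors must be tracked; the factor $[G:G_{M^j}]^2$ from squaring $\qdim_{V^G}M^j_\lambda$ cancels against the orbit size $[G:G_{M^j}]$ together with $|G_{M^j}|$ to yield the clean factor of $T$ above.
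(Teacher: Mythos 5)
Your proof is correct and follows essentially the same route as the paper's: both reduce the statement to Theorem \ref{class} combined with Lemma \ref{lglo1} and the quantum-dimension formula of Theorem \ref{MTH}. The only difference is cosmetic --- the paper specializes at once to $r=1$, where $G_M=G$ for every $M\in\mathscr{M}(g)$ (so each orbit is a singleton, each $W_{\lambda}$ is one-dimensional, and each $M$ splits into exactly $T$ irreducible $V^G$-modules of quantum dimension $\qdim_V M$), whereas you carry out the general orbit bookkeeping valid for every $r$, which in passing yields the slightly stronger statement $\glob(V)=\sum_{M\in\mathscr{M}(g^r)}(\qdim_V M)^2$ for all $r$.
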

\begin{proof}  We have already discussed that for any irreducible $g$-twisted $V$-module $M,$
$G_M=G.$ That is each $M$ itself is a $G$-orbit and irreducible $V^G$-submodules appearing
in different irreducible $g$-twisted $V$-modules are inequivalent $V^G$-modules (see Theorem \ref{MT}). Note that $G$ is an abelian group and each irreducible $g$-twisted $V$-module is a direct sum of $T$ irreducible $V^G$-modules. By Theorem \ref{MTH}, for any irreducible $V^G$-submodule $M_{\l}$ of $M,$
$\qdim_{V^G}M_{\l}=\qdim_V^M.$ As a result, the sum of square of the quantum dimensions  of inequivalent
irreducible $V^G$-modules appearing in $M$ is $T(\qdim_VM)^2.$ Applying Theorem \ref{class} and Lemma \ref{lglo1}  we see that
$$\sum_{M\in  \mathscr{M}(g)}(\qdim_VM)^2=\frac{1}{T^2}\glob(V^G)=\glob(V),$$
as desired.
\end{proof}

We now give an application of Theorem \ref{main} to a holomorphic vertex operator algebra $V$ which only has one irreducible module up to isomorphism, namely $V$ itself.

\begin{prop} \label{plast}If $V$ is a holomorphic vertex operator algebra and $G$ a cyclic  group, then any irreducible $V^G$-module is a simple current.
\end{prop}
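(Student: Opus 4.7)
The plan is to combine Theorem \ref{main} with the observation that $V$ holomorphic forces $\glob(V)=1$, and then use the cyclicity of $G$ to show that every irreducible $V^G$-module has quantum dimension $1$; such modules are simple currents.

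First I would apply Theorem \ref{main} to each power $g^r$ of a generator of $G=\langle g\rangle$ of order $T$. Holomorphicity gives $\mathscr{M}_V=\{V\}$ and $\glob(V)=1$, and by Theorem \ref{MTH} every irreducible twisted module satisfies $\qdim_V M\geq 1$ (the set $\{2\cos(\pi/n):n\geq 3\}\cup[2,\infty)$ has minimum $2\cos(\pi/3)=1$). The identity
$$1=\glob(V)=\sum_{M\in\mathscr{M}(g^r)}(\qdim_V M)^2$$
then forces $\mathscr{M}(g^r)=\{M^r\}$ for a unique module $M^r$ with $\qdim_V M^r=1$, for each $r=0,\dots,T-1$.

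Next I would note that since $\mathscr{M}(g^r)$ is a singleton, the $G$-orbit of $M^r$ is trivial and hence $G_{M^r}=G$. Because $G$ is cyclic, $H^2(G,\mathbb{C}^{\times})=0$, so the twisted group algebra $\mathbb{C}^{\alpha_{M^r}}[G]$ is isomorphic to $\mathbb{C}[G]$; all of its irreducible characters are one-dimensional, and there are exactly $T$ of them. Theorem \ref{MTH} then yields
$$\qdim_{V^G}M^r_\lambda\;=\;[G:G_{M^r}]\cdot\dim W_\lambda\cdot\qdim_V M^r\;=\;1$$
for every $\lambda\in\Lambda_{G,\alpha_{M^r}}$. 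By Theorem \ref{MTH1} every irreducible $V^G$-module appears as some $M^r_\lambda$, so every irreducible $V^G$-module has quantum dimension $1$.

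To finish, for any irreducible $V^G$-module $X$ with contragredient $X'$, the multiplicativity $\qdim_{V^G}(X\boxtimes X')=\qdim_{V^G}X\cdot\qdim_{V^G}X'=1$ combined with $N_{X,X'}^{V^G}\geq 1$ and the lower bound $\qdim_{V^G}Z\geq 1$ on every irreducible summand forces $X\boxtimes X'=V^G$. Hence $X$ is invertible in the fusion ring, so $X\boxtimes Y$ is irreducible for every irreducible $V^G$-module $Y$, that is, $X$ is a simple current. The only subtle point in the whole argument is recognizing that the projective cocycle $\alpha_{M^r}$ is cohomologically trivial on the cyclic group $G$, which is what guarantees that all $W_\lambda$ are one-dimensional; everything else is a direct application of the results assembled in Sections 3--5.
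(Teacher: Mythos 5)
Your proposal is correct and follows essentially the same route as the paper: apply Theorem \ref{main} with $\glob(V)=1$ to get a unique $g^r$-twisted module of quantum dimension $1$, use triviality of the cocycle on a cyclic group to decompose it into $V^G$-modules each of quantum dimension $1$, and conclude via Theorem \ref{MTH1}. The only difference is that you spell out the final step (quantum dimension $1$ implies simple current) with the $X\boxtimes X'$ fusion argument, where the paper simply cites \cite{DJX}.
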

\begin{proof} From Theorem \ref{MTH}, the quantum dimension of any irreducible $g$-twisted $V$-module
is greater than or equal to one for any $g\in G.$ Using Theorem \ref{main}, for each $g$ there is a unique irreducible $g$-twisted $V$-module $V(g)$ up to isomorphism and $\qdim_VV(g)=1.$  This implies that
$G_{V(g)}=G.$ According to a well known fact that any projective representation of a cyclic group is ordinary, we see that $V(g)$ is a $G$-module.  By decomposition (\ref{decom}) we have
$$V(g)=\oplus_{\lambda\in \irr(G)}W_{\l}\otimes V(g)_\l$$
where $\irr(G)$ is the set of irreducible characters if $G.$ Moreover, each $V(g)_{\l}\ne 0.$
Using Theorem \ref{MTH} concludes
that $\qdim_{V^G}V(g)_{\l}=\qdim_VV(g)=1$ for any irreducible $V^G$-module $V(g)_{\l}$ appearing in $V(g).$ It follows from \cite{DJX} that $V(g)_{\l}$ is a simple current.
\end{proof}

\begin{rem} Proposition \ref{plast} is false if $V$ is not holomorphic. Here is a counter example. Consider the lattice vertex operator algebra $V_L$ with $L=\Z\alpha$ and $(\alpha,\alpha)=2k.$ Let $G$ be the cyclic group generated by $theta$ which is the automorphism of $V_L$ induced from the $-1$-isometry of $L.$
We have already known that $V_L$ has irreducible $\theta$-twisted modules $V_L^{T_i}$ with $i=0,1$ with quantum dimension $\sqrt{k}.$ Each $V_L^{T_i}$ is a direct sum of two irreducible $V_L^G$-modules with quantum dimension  $\sqrt{k}$ by Theorem \ref{MTH}. If $k>1$ these irreducible $V_L^G$-modules appearing in irreducible $\theta$-twisted $V_L$-modules are not simple current although $G$ is abelian.
\end{rem}

We next compute the fusion rules for the vertex operator algebra $V^G$ in the setting of Proposition \ref{plast}. For this purpose we need to recall the quantum double $D(G)$ associated to the group $G$ from
\cite{D}, \cite{DPR}.  The quantum double $D(G)$  is the Hopf algebra with underlying vector space
$\C[G]\otimes \C[G]^*,$ multiplication,
$$(x\otimes e(g))(y\otimes e(h)) = \delta_{g,h}xy\otimes e(g)$$
and coproduct
$$\Delta(x\otimes e(g)) = \sum_{hk=g}(x\otimes e(h))(x\otimes e(k)) .$$
Clearly, $D(G)$ is exactly the $A_{\alpha}(G,{\cal S})$ as algebras defined  before with trivial $\alpha.$
We can index the irreducible representations by $( \lambda, e(g))$ for $g\in G$ and $\l\in \irr(G)$ such that
$x\otimes e(h)$ acts as  $\delta_{g,h}\lambda(x).$ Observe that the tensor product of irreducible
$D(G)$-modules obeys the following rule:
$$( \lambda, e(g))\otimes ( \mu, e(h))=(\lambda\mu, e(gh)).$$

\begin{prop} Let  $V$  be a holomorphic vertex operator algebra and $G$ a cyclic group. For $g,h\in G$ and $\lambda,\mu\in\irr(G)$ we have
$$V(g)_{\l}\boxtimes V(h)_{\mu}=V(gh)_{\l\mu}.$$
That is , the $V^G$-module tensor category and $D(G)$-module tensor category are equivalent.
\end{prop}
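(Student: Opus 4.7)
The plan is to compute the fusion coefficients $N_{V(g)_\lambda,V(h)_\mu}^{V(k)_\nu}$ of $V^G$ via the Verlinde formula, after first deriving an explicit expression for the relevant entries of the $V^G$ $S$-matrix. Since each $V(g)_\lambda$ is a simple current by Proposition \ref{plast}, the fusion product $V(g)_\lambda\boxtimes V(h)_\mu$ is automatically irreducible, so by Theorem \ref{MTH1} it has the form $V(k)_\nu$ for some $k\in G$ and $\nu\in\irr(G)$. I would first pin down $k$ by invoking Lemma \ref{extral}: the ratio $S_{V^1,X}/S_{V^G,X}=e^{2\pi i r/T}$ depends only on the twisted sector $g^r$ containing $X$, and this ratio is multiplicative under the fusion product by Lemma \ref{simplefusion}. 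Consequently the eigenvalue on $V(g)_\lambda\boxtimes V(h)_\mu$ is the product of the eigenvalues on the two factors, and so the fusion product must lie in the $gh$-twisted sector, forcing $k=gh$.

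To identify $\nu$, I would derive a closed-form expression for $S_{V(h)_\nu,V(k)_\mu}$ along the lines of the proof of Theorem \ref{MTH}. Starting from the character-orthogonality decomposition
\begin{equation*}
Z_{V(h)_\nu}(v,\tau)=\frac{1}{|G|}\sum_{k\in G}Z_{V(h)}(v,(h,k),\tau)\,\overline{\nu(k)},
\end{equation*}
applying the $S$-transform of Theorem \ref{minvariance}, and using the simplifications that $\mathscr{M}(k)=\{V(k)\}$ (because $V$ is holomorphic) and that $\alpha_{V(k)}$ is trivial (because $G$ is cyclic), I arrive at
\begin{equation*}
S_{V(h)_\nu,V(k)_\mu}=\frac{1}{|G|}\,S_{V(h),V(k)}\,\mu(h^{-1})\,\overline{\nu(k)}
\end{equation*}
for some scalar $S_{V(h),V(k)}$ coming from the extended modular representation. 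The key step is then to argue that $S_{V(h),V(k)}=1$ for all $h,k\in G$: positivity of $S_{V^G,V(k)_\mu}$ (Proposition \ref{pDLN}) combined with $\qdim_V V(k)=1$ from Theorem \ref{main} gives $S_{V(h),V(k)}$ unit modulus, and the symmetry together with the $S^2=C$ relation on the $V^G$ $S$-matrix (Theorem \ref{Verlinde}) should pin down the remaining phase.

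With the explicit $S$-matrix in hand, substitution into the Verlinde formula (Theorem \ref{Verlinde}) gives
\begin{equation*}
N_{V(g)_\lambda,V(h)_\mu}^{V(k)_\nu}=\frac{1}{|G|^2}\sum_{l\in G}\sum_{\rho\in\irr(G)}\rho(g^{-1}h^{-1}k)\,\nu\,\overline{\lambda\mu}(l),
\end{equation*}
which factorizes into two orthogonality sums, over $G$ and $\irr(G)$ respectively, collapsing to $\delta_{k,gh}\,\delta_{\nu,\lambda\mu}$. This yields the fusion rule $V(g)_\lambda\boxtimes V(h)_\mu=V(gh)_{\lambda\mu}$, and the bijection $(g,\lambda)\mapsto V(g)_\lambda$ then intertwines the fusion in $V^G$-mod with the tensor product of irreducible $D(G)$-modules, establishing the desired tensor-categorical equivalence. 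The principal obstacle is the step $S_{V(h),V(k)}=1$: without it the computation would still produce an explicit fusion rule, but one twisted by an \emph{a priori} nontrivial bicharacter on $G$ (equivalently, a nontrivial $3$-cocycle $\omega\in H^3(G,U(1))$), so untwisting genuinely uses the holomorphic structure of $V$ together with the global constraints from $SL(2,\Z)$-invariance and positivity of $S$-matrix entries.
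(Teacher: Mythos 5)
Your route is genuinely different from the paper's. The paper never invokes the Verlinde formula for this proposition: it observes that $V(g)_{\lambda}\boxtimes V(h)_{\mu}$ is irreducible (since $V(g)_{\lambda}$ is a simple current by Proposition \ref{plast}), puts a $D(G)$-module structure on the one-dimensional space of intertwining operators of type $\binom{V(g)_{\lambda}\boxtimes V(h)_{\mu}}{V(g)_{\lambda}\ V(h)_{\mu}}$ by the formula ${\cal Y}_{x\otimes e(k)}(u,z)w=\sum_{ab=k}{\cal Y}((x\otimes e(a))u,z)(x\otimes e(b))w$, computes directly that this module has character $(\lambda\mu,e(gh))$, and then transports this $D(G)$-type onto the fusion product using the nonvanishing $u_nw\neq 0$ from \cite{DL1}. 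Your opening steps are sound and consistent with the paper's toolkit: irreducibility of the product, the location of the product in the $gh$-twisted sector via Lemma \ref{extral} and the multiplicativity of the simple-current eigenvalue (this is exactly the mechanism of Theorem \ref{class}), and the shape of the $S$-matrix entry $S_{V(h)_\nu,V(k)_\mu}=\frac{1}{|G|}\,c_{h,k}\,\mu(h^{-1})\overline{\nu(k)}$ obtained by running the proof of Theorem \ref{MTH} in the holomorphic setting.

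The gap is the one you yourself flag, and your proposed repair does not close it. The constraints you list --- unitarity, symmetry, $S^2=C$ (Theorem \ref{Verlinde}), positivity of the row $S_{V^G,\cdot}$, and $\qdim_VV(k)=1$ --- only force $c_{1,k}=c_{h,1}=1$, $|c_{h,k}|=1$, $c_{h,k}=c_{k,h}$ and $c_{h,k}\,c_{h,k^{-1}}=1$; any symmetric normalized bicharacter on $G$ satisfies all of these, and more importantly so do functions $c$ that are \emph{not} multiplicative in the first argument. When you push a general such $c$ through the Verlinde sum, the answer is $\delta_{k,gh}\,\delta_{\nu,\lambda\mu\epsilon_{g,h}}$ where $\epsilon_{g,h}(l)=c_{g,l}c_{h,l}\overline{c_{gh,l}}$, which is precisely the character-valued $2$-cocycle measuring the class of a twist $\omega\in H^3(G,U(1))$ in the sense of \cite{DPR}. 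The modular data of every twisted double $D^{\omega}(G)$ passes all of your axiomatic tests, so no argument at the level of abstract $SL(2,\Z)$ data can prove $\epsilon_{g,h}=1$; some concrete input from $V$ is unavoidable. That input is exactly what the paper's intertwining-operator computation supplies (the identity $ab=k$ forcing $k=gh$ and the scalar $\lambda(x)\mu(x)$), so your plan as written stalls at its crux, whereas the paper's argument, whatever one thinks of its other details, does not pass through the undetermined phases $c_{h,k}$ at all. If you want to salvage the $S$-matrix route you would need an independent determination of $S_{V(h),V(k)}$ (equivalently a coherent normalization of the maps $\varphi(h^{-1})$ on all the $V(k)$ simultaneously), which is essentially equivalent in difficulty to the paper's direct construction.
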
\begin{proof} Note that each irreducible $V^G$-module  $V(g)_\l$ is  a $D(G)$-module
which is a direct sum of $D(G)$-module $(\lambda,e(g)).$ So $V(g)_{\l}\boxtimes V(h)_{\mu}$ is also a  $D(G)$-module. By Proposition \ref{plast}, $V(g)_{\l}\boxtimes V(h)_{\mu}$ is an irreducible $V^G$-module
as $V(g)_{\l}$ is a simple current.  It is good enough to show that $V(g)_{\l}\boxtimes V(h)_{\mu}$ is a $D(G)$--module with character $(\l\mu,e(gh)).$

Recall that the tensor product $V(g)_{\l}\boxtimes V(h)_{\mu}$ is a $V^G$-module
together with an intertwining operator ${\cal Y}$ of type $\left(
\begin{array}{c}
\ \ V(g)_{\l}\boxtimes V(h)_{\mu}  \\
V(g)_{\l} \ \  V(h)_{\mu}
\end{array}
 \right)$
such that for any $V^G$-module $W$ and any intertwining operator $I$ of type $\left(
\begin{array}{c}
\ \  W  \\
V(g)_{\l} \ \  V(h)_{\mu}
\end{array}
 \right)$
there is a unique $V^G$-module homomorphism $\phi: V(g)_{\l}\boxtimes V(h)_{\mu}\to W$ so that
$I=\phi {\cal Y}.$  We have already mentioned that $D(G)= A_{\alpha}(G,{\cal S})$ and the actions of $V^G$ and $D(G)$ commute on $V(g)$ for all $g\in G.$ Note  that  $I\left(
\begin{array}{c}
\ \ V(g)_{\l}\boxtimes V(h)_{\mu}  \\
V(g)_{\l} \ \  V(h)_{\mu}
\end{array}
 \right)$
is  one dimensional as $V(g)_{\l}\boxtimes V(h)_{\mu}$ is irreducible.

We now define an action of $D(G)$ on the space $I\left(
\begin{array}{c}
\ \ V(g)_{\l}\boxtimes V(h)_{\mu}  \\
V(g)_{\l} \ \  V(h)_{\mu}
\end{array}
 \right)$ such that
$${\cal Y}_{x\otimes e(k)}(u,z)w=\sum_{ab=k}{\cal Y}((x\otimes e(a))u,z)(x\otimes e(b))w$$
for $x,k\in G,$  $u\in V(g)_{\l}$ and $w\in V(h)_{\mu}.$
It is easy to verify that $${\cal Y}_{x\otimes e(k)}\in I\left(
\begin{array}{c}
\ \ V(g)_{\l}\boxtimes V(h)_{\mu}  \\
V(g)_{\l} \ \  V(h)_{\mu}
\end{array}
 \right)$$ and
$I\left(
\begin{array}{c}
\ \ V(g)_{\l}\boxtimes V(h)_{\mu}  \\
V(g)_{\l} \ \  V(h)_{\mu}
\end{array}
 \right)$ is one dimensional irreducible $D(G)$-module.
It is also clear that
$${\cal Y}_{x\otimes e(k)}(u,z)w=\delta_{k,gh}{\cal Y}((x\otimes e(g))u,z)(x\otimes e(h))w=\lambda(x)\mu(x){\cal Y}(u,z)w.$$
Thus, $I\left(
\begin{array}{c}
\ \ V(g)_{\l}\boxtimes V(h)_{\mu}  \\
V(g)_{\l} \ \  V(h)_{\mu} \end{array}
 \right)$
is isomorphic to $(\lambda\mu,e(gh)).$
Now consider the linear map from  $I\left(
\begin{array}{c}
\ \ V(g)_{\l}\boxtimes V(h)_{\mu}  \\
V(g)_{\l} \ \  V(h)_{\mu} \end{array}
 \right)$ to  $V(g)_{\l}\boxtimes V(h)_{\mu} $ such that ${\cal Y}$ is mapped to $u_nw$
for any nonzero $u\in V(g)_{\l}$ and $w\in V(h)_{\mu}$ and $n\in \Q.$ This map is a $D(G)$-module
homomorphism. By \cite{DL1}, there exist $u, w$ and $n$ such that $u_nw\ne 0.$ So $V(g)_{\l}\boxtimes V(h)_{\mu} $ has a $D(G)$-submodule isomorphic to $(\lambda\mu,e(gh))$ and contains $V(gh)_{\l\mu}.$
As a result,  $V(g)_{\l}\boxtimes V(h)_{\mu} = V(gh)_{\l\mu}.$ \end{proof}

For general $V$ the fusion rules are more complicated.  However, we can determine the fusion rules among irreducible $V^G$-modules appearing in $V$ using \cite{T}.
\begin{coro} Let $V$ be a vertex operator algebra and $G$ a finite automorphism group of $V$
satisfying conditions (V1)-(V4).  Then for any $\lambda,\mu\in\irr(G)$
$$V_{\l}\boxtimes V_{\mu}=\sum_{\gamma\in \irr(G)}\dim {\rm Hom}_{G}(W_{\l}\otimes W_{\mu},W_{\gamma})
V_{\gamma}.$$
\end{coro}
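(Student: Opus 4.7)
The plan is to produce intertwining operators of the claimed type directly from the vertex operator of $V$ via the isotypic decomposition, and then invoke \cite{T} to show that these exhaust the whole intertwining operator space.

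First, I would fix the $(G\times V^G)$-module decomposition $V=\bigoplus_{\lambda\in\irr(G)}W_\lambda\otimes V_\lambda$ reviewed in Section 3. Because $G$ acts on $V$ by vertex operator algebra automorphisms, the vertex operator map $Y:V\otimes V\to V((z))$ is $G$-equivariant. For each $G$-equivariant map $f:W_\lambda\otimes W_\mu\to W_\gamma$, composing $f$ with the restriction of $Y$ to the subspaces $W_\lambda\otimes V_\lambda$ and $W_\mu\otimes V_\mu$ and then projecting onto the $W_\gamma$-isotypic component produces a $V^G$-intertwining operator
$$\mathcal{Y}_f\in I_{V^G}\!\left(\begin{array}{c} V_\gamma\\ V_\lambda\ \ V_\mu\end{array}\right).$$
A nondegeneracy argument in the spirit of \cite{DM} and \cite{DLM2}, using the simplicity of $V$ and of the $V_\lambda$'s, shows that $f\mapsto \mathcal{Y}_f$ is injective, which gives the lower bound
$$N_{V_\lambda,V_\mu}^{V_\gamma}\ \geq\ \dim\Hom_G(W_\lambda\otimes W_\mu,W_\gamma).$$

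Second, I would invoke \cite{T} for the matching upper bound: every $V^G$-intertwining operator of type $\left(\begin{array}{c} V_\gamma\\ V_\lambda\ \ V_\mu\end{array}\right)$ arises from the vertex operator of $V$ through exactly the construction above, and no irreducible $V^G$-module outside the family $\{V_\gamma\mid\gamma\in\irr(G)\}$ occurs as a constituent of $V_\lambda\boxtimes V_\mu$. The first assertion yields the reverse inequality, while the second controls the support of the fusion product in $\mathscr{M}_{V^G}$. Putting the two inequalities together recovers the claimed formula.

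The main obstacle, absent the appeal to \cite{T}, would be the reverse inequality: ruling out extra intertwining operators and extra irreducible constituents of $V_\lambda\boxtimes V_\mu$. This requires a detailed analysis of the Zhu-type algebras $A(V^G)$ as bimodules and of how $A(V)$ decomposes over them, which is precisely the content of Tanabe's theorem. Once it is invoked, the corollary reduces to the equivariant combinatorial identity already built into the isotypic decomposition of $V$.
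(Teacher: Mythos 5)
There is a genuine gap, and it sits exactly where you flagged it: the reverse inequality. The construction you sketch in the first step is fine and is essentially Tanabe's; it yields $N_{V_{\l},V_{\mu}}^{V_{\gamma}}\geq \dim\Hom_G(W_{\l}\otimes W_{\mu},W_{\gamma})$, which is the only thing the paper extracts from \cite{T}. But \cite{T} does not prove the matching upper bound you attribute to it: it does not show that every intertwining operator of type $\left(\begin{array}{c} V_{\gamma}\\ V_{\l}\ \ V_{\mu}\end{array}\right)$ comes from $Y$, nor that no irreducible $V^G$-module outside $\{V_{\gamma}\}$ occurs in $V_{\l}\boxtimes V_{\mu}$. (Indeed, at the time of \cite{T} the rationality of $V^G$ was not even available, so the fusion product was not under control.) As written, your argument simply asserts the hard half of the statement and outsources it to a reference that does not contain it.

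The paper closes the argument by a quantum dimension count rather than by analyzing the intertwining operator spaces. By the multiplicativity of quantum dimensions \cite{DJX} one has $\qdim_{V^G}(V_{\l}\boxtimes V_{\mu})=\qdim_{V^G}V_{\l}\cdot\qdim_{V^G}V_{\mu}=\dim W_{\l}\dim W_{\mu}$, using $\qdim_{V^G}V_{\gamma}=\dim W_{\gamma}$. On the other hand, the Tanabe lower bound already forces
$$\qdim_{V^G}(V_{\l}\boxtimes V_{\mu})\ \geq\ \sum_{\gamma\in\irr(G)}\dim\Hom_G(W_{\l}\otimes W_{\mu},W_{\gamma})\dim W_{\gamma}=\dim(W_{\l}\otimes W_{\mu})=\dim W_{\l}\dim W_{\mu},$$
and since every irreducible $V^G$-module has quantum dimension at least $1$, there is no room left for extra constituents or excess multiplicities; all the inequalities collapse to equalities. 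If you want to repair your proposal, replace the appeal to \cite{T} for the upper bound by this counting argument (which is where assumptions (V1)--(V4), the classification Theorem 3.5, and the quantum dimension formula $\qdim_{V^G}V_{\l}=\dim W_{\l}$ actually enter).
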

\begin{proof} From \cite{DJX} we know that
$$\qdim_{V^G}V_{\l}\boxtimes V_{\mu}=\qdim_{V^G}V_\l\qdim_{V^G}V_{\mu}=\dim W_{\l}\dim W_{\mu}.$$
From \cite{T},  $N_{V_{\l}, V_{\mu}}^{V_{\gamma}}\geq \dim \Hom_{G}(W_{\l}\otimes W_{\mu},W_{\gamma}).$ The proof is complete.
\end{proof}

\end{document}